\sloppy\pagestyle{plain}
\newtheorem{theorem}{Theorem}[section]
\newtheorem{lemma}[theorem]{Lemma}
\newtheorem{proposition}[theorem]{Proposition}
\newtheorem{corollary}[theorem]{Corollary}
\theoremstyle{definition}
\newtheorem{definition}[theorem]{Definition}
\newtheorem{example}[theorem]{Example}
\theoremstyle{remark}
\newtheorem{remark}[theorem]{Remark}
\numberwithin{equation}{section}
\def\m{\medskip}
\newcommand{\Pic}{\operatorname{Pic}}
\begin{document}

\title{Slopes of smooth curves on Fano manifolds}
\author{Jun-Muk Hwang${}^1$, Hosung Kim${}^1$, Yongnam Lee${}^2$, and Jihun Park${}^3$}

\address{Korea Institute for Advanced Study, Hoegiro 87,
         Seoul 130-722, Korea}

\email{jmhwang@kias.re.kr}

\address{Korea Institute for Advanced Study, Hoegiro 87,
         Seoul 130-722, Korea}

\email{hosung@sogang.ac.kr}

\address{Department of Mathematics, Sogang University,
         Seoul 121-742, Korea}

\email{ynlee@sogang.ac.kr}

\address{Department of Mathematics, POSTECH, Pohang, Kyungbuk,
790-784, Korea}

\email{wlog@postech.ac.kr}


\subjclass[2000]{14J45; 14L24}

\keywords{slope stability, Fano manifold, rational curve}
\thanks{${}^1$ supported
by National Researcher Program 2010-0020413 of NRF and MEST}
\thanks{${}^2$ supported by Mid-career Researcher Program through NRF grant funded by the MEST
(No. 2010-0008752)}
\thanks{${}^3$ supported by Basic
Science Research Program through NRF grant funded by the MEST (No.
2010-0008235)}

\begin{abstract}
Ross and Thomas introduced the concept of slope stability to study
K-stability, which has conjectural relation with the existence of
constant scalar curvature  K\"ahler metric.  This paper presents a
study of slope stability of Fano manifolds of dimension $n\geq 3$
with respect to smooth curves. The question turns out to be easy
for curves of genus $\geq 1$ and the interest lies in the case of
smooth rational curves. Our main result classifies completely the
cases when a polarized Fano manifold $(X, -K_X)$ is not slope
stable with respect to a smooth curve. Our result also states that
a Fano threefold $X$ with Picard number 1 is slope stable with
respect to every smooth curve unless $X$ is the projective space.
\end{abstract}
\maketitle


\section{Introduction}
One of the fundamental problems in K\"ahler geometry is to
determine which Fano manifold $X$ admits a K\"ahler-Einstein
metric. It is expected  that the existence of a K\"ahler-Einstein
metric is closely related to the stability condition of the
polarized manifold $(X, -K_X)$. In fact, it is known that, for a
polarized manifold $(X,L)$, the existence of constant scalar
curvature  K\"ahler metric in the class $c_1(L)$ implies
K-semistability of $(X,L)$ (cf. \cite{CT08}, \cite{D05}).

The K-semistability of a polarized manifold is often very hard to
check. To remedy this,  Ross and Thomas introduced the notion of
the slope (semi-)stability of a polarized manifold $(X, L)$ and
showed that K-(semi-)stability implies slope (semi-)stability (cf.
\cite{RT07}). The question of the slope stability of a given
polarized manifold with respect to a subscheme is an interesting
algebro-geometric problem in itself. Many cases have been worked
out in \cite{PR09}, \cite{R03}, \cite{R06}, \cite{RT06},
\cite{RT07}.

An essential difficulty in this problem often lies in the
estimation of the Seshadri number of the ample line bundle along
the subvarieties.  This way, it is related to the study of
Seshadri numbers, which is an important subject in classical
algebraic geometry.

\m In this paper, we study the  slope stability of a Fano manifold
$(X, -K_X)$ with respect to smooth  curves. It is rather easy to
see that $(X, -K_X)$ is always slope stable with respect to a
smooth curve of genus $\geq 1$ (cf. Corollary~\ref{cor:high
genus}). Thus our main concern will be smooth rational curves.
Since a Fano manifold is covered by rational curves, many
geometric properties of a Fano manifold can be described by
rational curves. We  completely classify the cases when a
polarized Fano manifold $(X, -K_X)$ is not slope stable with
respect to a smooth curve. More precisely,

\begin{theorem}\label{theorem:A}
Let $X$ be a Fano manifold of dimension $n\geq 2$. If the polarized
manifold $(X,-K_X)$ is not slope stable with respect to a smooth
subvariety $Z$, then the subvariety $Z$ is a Fano manifold.
\end{theorem}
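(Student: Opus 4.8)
The plan is to convert the failure of slope stability into a positivity statement about an explicit divisor on the blow‑up of $X$ along $Z$, and then to read off the Fano property of $Z$. Write $r=\operatorname{codim}(Z)$, let $\pi\colon\widehat X\to X$ be the blow‑up of $X$ along $Z$ with exceptional divisor $E=\mathbb{P}(N^*_{Z/X})\xrightarrow{\,p\,}Z$, and set $L=-K_X$. Using $K_{\widehat X}=\pi^*K_X+(r-1)E$ and the identity $\mu(X,-K_X)=n/2$, a Riemann--Roch computation on $\widehat X$ (expanding $\chi(\widehat X,k(\pi^*L-xE))=a_0(x)k^n+a_1(x)k^{n-1}+\cdots$) shows that for $0<c\le\varepsilon(Z)$, where $\varepsilon(Z)$ denotes the Seshadri constant of $Z$ with respect to $-K_X$, the sign of $\mu_c(\mathcal O_Z,-K_X)-\mu(X,-K_X)$ equals the sign of
\[
\int_0^c (x-r)\,\big(E\cdot(\pi^*L-xE)^{n-1}\big)\,dx .
\]
Hence, by the definition of slope stability, if $(X,-K_X)$ is not slope stable with respect to $Z$, then this integral is $\ge 0$ for some $c\in(0,\varepsilon(Z)]$.

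The next step is to deduce from this that $\varepsilon(Z)>r$. For $0<x<\varepsilon(Z)$ the class $\pi^*L-xE$ is ample, so $E\cdot(\pi^*L-xE)^{n-1}>0$, since a nonzero effective divisor has positive intersection with a power of an ample class. Thus the integrand above has the sign of $x-r$, so the integral is a strictly decreasing function of $c$ on $(0,\min\{r,\varepsilon(Z)\})$, vanishes at $c=0$, and stays negative on all of $(0,\varepsilon(Z)]$ as soon as $\varepsilon(Z)\le r$. Therefore the failure of slope stability forces $\varepsilon(Z)>r$, equivalently that $A:=\pi^*(-K_X)-rE$ is an ample divisor on $\widehat X$.

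Now I would descend $A$ to $Z$. By adjunction $K_E=(K_{\widehat X}+E)|_E=(\pi^*K_X+rE)|_E$, so $-K_E=A|_E$ is ample and $E$ is a Fano manifold. Let $\xi=c_1(\mathcal O_E(1))$ be the tautological class of $E=\mathbb{P}(N^*_{Z/X})$ normalized by $p_*\mathcal O_E(1)=N^*_{Z/X}$; then $K_E=-r\xi+p^*\!\big(K_Z+\det N^*_{Z/X}\big)$, hence in $\operatorname{Pic}(E)\otimes\mathbb{Q}$
\[
\tfrac1r(-K_E)=\xi+p^*M,\qquad M:=\tfrac1r\big(-K_Z-\det N^*_{Z/X}\big).
\]
The left‑hand side is $\mathbb{Q}$‑ample, and $\xi+p^*M$ is the tautological class of the $\mathbb{Q}$‑twisted vector bundle $N^*_{Z/X}\otimes M$ on $Z$, so $N^*_{Z/X}\otimes M$ is ample. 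Since an ample (twisted) vector bundle has ample determinant, $\det\big(N^*_{Z/X}\otimes M\big)=\det N^*_{Z/X}+rM=-K_Z$ is ample, i.e. $Z$ is a Fano manifold. (The case $r=1$, where $\widehat X=X$ and $E\cong Z$, is subsumed in the same computation.)

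I expect the main obstacle to be the second step: recognizing that slope stability of $(X,-K_X)$ with respect to $Z$ can fail only when $\varepsilon(Z)>\operatorname{codim}(Z)$. This is the heart of the matter, because exactly this borderline also governs the ampleness of $-K_E$, hence of $-K_Z$. The care there lies in the Riemann--Roch bookkeeping behind the sign formula for $\mu_c-\mu$ and in verifying that one really obtains the strict inequality $\varepsilon(Z)>r$ rather than $\ge$; steps three and four are then essentially formal, using adjunction on the exceptional divisor and the standard positivity of determinants of ample bundles.
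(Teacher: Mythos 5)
Your argument is correct, and its core coincides with the paper's: the computation identifying the sign of $\mu_c(\mathcal O_Z)-\mu(X)$ with the sign of $\int_0^c(r-x)\,E\cdot(\pi^*(-K_X)-xE)^{n-1}\,dx$ is exactly the paper's Lemma~\ref{r.fano} (slope stability holds whenever $\epsilon(Z)\le r$), and the deduction that failure of stability forces $\epsilon(Z)>r$, hence ampleness of $\pi^*(-K_X)-rE=-(K_{\widehat X}+E)$ and, by adjunction, ampleness of $-K_E$, is the paper's Lemma~\ref{lemma:Fano}. One small slip: with your normalization the weight in the sign formula should be $(r-x)$, not $(x-r)$; the inequality you actually use afterwards (not slope stable implies $\int_0^c(x-r)\,E\cdot(\pi^*(-K_X)-xE)^{n-1}\,dx\ge 0$ for some $c$) is the correct one, so nothing downstream is affected. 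Where you genuinely differ is the final step: the paper passes from ``$E=\mathbb P(N_{Z/X})$ is Fano'' to ``$Z$ is Fano'' by citing Szurek--Wi\'sniewski (Theorem~1.6 of \cite{SzuWi90}), whereas you prove it directly, writing $\tfrac1r(-K_E)=\xi+p^*M$ as the tautological class of the $\mathbb Q$-twisted bundle $N^*_{Z/X}\langle M\rangle$ and using that an ample $\mathbb Q$-twisted bundle has ample determinant, which here is precisely $-K_Z$. Your route is self-contained --- it in effect reproves the needed special case of Szurek--Wi\'sniewski via the relative Euler sequence --- at the cost of invoking the formalism of twisted bundles and the (standard, symmetric-power) argument that their determinants inherit ampleness; the paper's citation is shorter but less transparent. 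Both arguments rely on the standard fact that $\pi^*(-K_X)-xE$ is ample for $0<x<\epsilon(Z)$, which you use both for $E\cdot(\pi^*(-K_X)-xE)^{n-1}>0$ and for the ampleness of $\pi^*(-K_X)-rE$; that is fine and is also implicit in the paper.
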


\begin{theorem}\label{theorem:B}
Let $X$ be a Fano manifold of dimension $n\geq 3$.
\begin{enumerate}
\item If the polarized manifold $(X,-K_X)$ is not slope stable
    with respect to a smooth curve $Z$, then the curve $Z$ is  one of the following
    \begin{enumerate}
    \item a rational curve whose normal bundle trivial and whose Seshadri
         constant with respect to $-K_X$ is $n$;
    \item a rational curve whose normal bunble is
    $\mathcal{O}_{\mathbb{P}^1}^{n-2}\oplus\mathcal{O}_{\mathbb{P}^1}(-1)$;
    \item a line on $\mathbb{P}^n$.
    \end{enumerate}
\item The polarized threefold $(X,-K_X)$ is slope semistable
    with respect to every smooth curve except when the curve
    $Z$ is a rational curve whose normal bundle is
    $\mathcal{O}_{\mathbb{P}^1}^{n-2}\oplus\mathcal{O}_{\mathbb{P}^1}(-1)$.
\end{enumerate}
\end{theorem}

\begin{theorem}\label{theorem:D}
Let $X$ be a Fano manifold of Picard number $1$ and dimension $n\geq
3$. Then the polarized manifold $(X,-K_X)$ is slope stable with
respect to a smooth rational curve $Z$ whose normal bundle is
trivial. Furthermore, in the case of dimension $3$, the Fano
manifold $X$ is slope stable with respect to every smooth curve $Z$
except when $X\cong\mathbb{P}^3$ and $Z$ is a line.
\end{theorem}

The proofs of the theorems make use of  the deformation of
rational curves (see \cite{CMS} and \cite{K95}), vector bundles
over manifolds whose projectivisations are Fano manifolds and the
classification of Fano threefolds with Picard number 1 (see
\cite{SH5}).

In this paper, we work over the field of complex numbers.

\m
%

\section{Slope stability and Fano bundles}
This section briefly reviews the concept of slope stability, and
proves Theorem~\ref{theorem:A}. For more details on slope
stability, we refer to \cite{RT06} and \cite{RT07}.

A polarized manifold $(X,L)$ is a pair of a smooth projective
variety $X$ with an ample line bundle $L$ on $X$.

The Seshadri constant of  a proper closed subscheme $Z$ of $X$
with respect to the ample line bundle $L$ is defined as
$$\epsilon(Z,X,L):=\max\{c\ |\ \sigma^*L-cE\ \text{ is nef }\},$$
where $\sigma:\hat{X}\rightarrow X$ is the blowup along $Z$ with
the exceptional divisor $E$. When the polarized manifold is
clearly given, we will use shortly $\epsilon(Z)$ instead of
$\epsilon(Z,X,L)$.

The following is immediate from the definition.

\begin{lemma}\label{s.Bdd}
Let $(X,L)$ be a polarized manifold and $Z$ be a a proper closed
subscheme of $X$. If there is a curve $C$ with $C\not\subset Z$
and $C\cap Z\neq\emptyset$, then $\epsilon(Z)\leq L\cdot C$.
\end{lemma}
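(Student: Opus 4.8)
The plan is to test the nefness condition defining $\epsilon(Z)$ against the strict transform of $C$. Write $\sigma\colon\hat X\to X$ for the blowup along $Z$ with exceptional divisor $E$, and let $\hat C\subset\hat X$ denote the strict transform of $C$, i.e. the closure of $\sigma^{-1}(C\setminus Z)$. Since $\sigma$ restricts to an isomorphism over $X\setminus Z$ and the hypothesis $C\not\subset Z$ forces $C\setminus Z$ to be dense in $C$, the induced morphism $\sigma|_{\hat C}\colon\hat C\to C$ is birational; in particular it is proper and surjective, and $\sigma_*[\hat C]=[C]$. First I would record, via the projection formula, that $\sigma^*L\cdot\hat C=L\cdot\sigma_*[\hat C]=L\cdot C$.

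Next I would check that $E\cdot\hat C\ge 1$. Pick a point $p\in C\cap Z$, which exists by hypothesis. Because $\sigma|_{\hat C}$ is surjective onto $C$, the set $\sigma^{-1}(p)\cap\hat C$ is nonempty, and since $E$ coincides set-theoretically with $\sigma^{-1}(Z)$ we get $\hat C\cap E\neq\emptyset$. On the other hand $\hat C\not\subset E$, since $\sigma(\hat C)=C$ is not contained in $Z$. As $E$ is an effective Cartier divisor on the projective variety $\hat X$ and $\hat C$ is a complete curve that meets $E$ but is not contained in it, the line bundle $\mathcal O_{\hat X}(E)|_{\hat C}$ carries a nonzero section with nonempty zero locus, whence $E\cdot\hat C=\deg\bigl(\mathcal O_{\hat X}(E)|_{\hat C}\bigr)\ge 1$.

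Finally, suppose $c$ is a number for which $\sigma^*L-cE$ is nef. Intersecting with the complete curve $\hat C$ gives $0\le(\sigma^*L-cE)\cdot\hat C=L\cdot C-c\,(E\cdot\hat C)$, so $c\le (L\cdot C)/(E\cdot\hat C)\le L\cdot C$, using $E\cdot\hat C\ge1$ and $L\cdot C>0$ (as $L$ is ample). Taking the maximum over all such $c$ yields $\epsilon(Z)\le L\cdot C$, as claimed.

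The argument is entirely elementary, so there is no real obstacle; the only point needing a little care is the inequality $E\cdot\hat C\ge1$ — that the strict transform genuinely meets the exceptional divisor — and this is precisely where both hypotheses $C\not\subset Z$ and $C\cap Z\neq\emptyset$ are used. (If one wishes to allow reducible $C$, one first replaces $C$ by a suitable irreducible component; I would state the lemma for irreducible $C$.)
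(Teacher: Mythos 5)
Your argument is correct, and it is exactly the verification the paper has in mind: the paper gives no proof, declaring the lemma ``immediate from the definition,'' and your computation with the strict transform $\hat C$ (namely $\sigma^*L\cdot\hat C=L\cdot C$, $E\cdot\hat C\geq 1$, and nefness of $\sigma^*L-cE$ tested on $\hat C$) is the standard way to make that immediacy explicit. Your closing caveat about irreducibility is sensible, since the paper's intended meaning of ``curve'' here is an irreducible one.
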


\begin{remark}\label{r.s.Bdd}
We remark the following  facts on Seshadri constant which can be
found in Example 5.4.11
 and Proposition 5.4.15 in \cite{L04}.
\begin{itemize}
\item[(i)] Let $Z_1$, $Z_2$, $Z$ be proper closed subschemes
    of a projective variety $X$ defined by ideal sheaves
    $\mathcal I_1$, $\mathcal I_2$, $\mathcal I_1+ \mathcal
    I_2$ respectively.  Then for any ample line bundle $L$ on
    $X$, we have
$$\epsilon(Z,X,L)\geq {\rm
min}\{\epsilon(Z_1,X,L),\epsilon(Z_2,X,L)\}.$$
\item[(ii)] Consider smooth projective varieties $$Z\subset
    Y\subset X$$ and let $L$ be an ample line bundle on $X$.
    If $\epsilon(Z,X,L)<\epsilon(Y,X,L)$, then $\epsilon(Z,Y,
    L|_Y)=\epsilon(Z,X,L)$.
\end{itemize}
\end{remark}

\begin{example}\label{e.s.linear} For
any proper linear subspace $Z$ of $\mathbb P^n$,  we claim that
$\epsilon(Z,\mathbb P^{n}, -K_{\mathbb P^n}) = n+1$.

For each proper linear subspace $Z$ in $\mathbb P^n$, choose a
line $l$ in $\mathbb P^n$ so that $l\not\subset Z$ and $l\cap
Z\neq\emptyset$. Then $\epsilon(Z,\mathbb P^{n}, -K_{\mathbb
P^n})\leq (-K_{\mathbb P^n})\cdot l=n+1$ by Lemma \ref{s.Bdd}.

For the reverse inequality, we use an induction.  For any
hyperplane $H$ in $\mathbb P^n$,
\begin{align*}\epsilon(H,\mathbb P^{n}, -K_{\mathbb P^n})&={\rm max}\{x| \ -K_{\mathbb P^n}-xH \text{ is nef} \}\\
&={\rm max}\{x| \ \mathcal O_{\mathbb P^n}(n+1-x) \text{ is nef}
\}=n+1.\end{align*} Suppose the equality holds for any linear
subspace of codimension $\leq r-1$. Let $Z$ be a codimension $r$
linear subspace of $\mathbb P^n$. Choose hyperplanes
$H_1$,...,$H_r$ so that $H_1\cap ...\cap H_r=Z$. Then
$$\epsilon(Z,\mathbb P^{n}, -K_{\mathbb P^n})\geq {\rm
min}\{\epsilon(H_1\cap ...\cap H_{r-1},\mathbb P^{n}, -K_{\mathbb
P^n}),\epsilon(H_r,\mathbb P^{n}, -K_{\mathbb P^n})\}=n+1$$ by
Remark \ref{r.s.Bdd} (i) and the induction hypothesis.
\end{example}

Let $(X,L)$ be a polarized manifold of dimension $n$ with Hilbert
polynomial
$$\chi(\mathcal O_X(kL))=a_0k^n+a_1k^{n-1}+O(k^{n-2}), \ k>\!\!>0.$$
The slope of $(X,L)$ is defined by
$$\mu(X)=\mu(X,L):=\frac{a_1}{a_0}=-\frac{nK_XL^{n-1}}{2L^n}.$$
In particular, if $X$ is Fano and $L=-K_X$, then
$\mu(X)=\frac{n}{2}.$

Let $Z$ be a proper closed subscheme of $X$ and
$\sigma:\hat{X}\rightarrow X$ be the blowup along $Z$ with the
exceptional divisor $E$. For fixed $x\in \mathbb Q_{>0}$, define
$a_i(x)$ by
$$\chi(\mathcal O_{\hat X}(\sigma^*(kL)-xkE))=a_0(x)k^n+a_1(x)k^{n-1}+O(k^{n-2}), \ k>\!\!>0,\  xk\in \mathbb N.$$
Then $a_i(x)$ can be extended to all  $x\in \mathbb R$ as a
polynomial  of degree at most $n-i$. In particular, when $Z$ is a
submanifold of dimension $d$, then we have
$$a_0(x)=\frac{1}{n!}(\sigma^*L-xE)^n$$
and
$$a_1(x)=-\frac{1}{2(n-1)!}K_{\hat X}(\sigma^*L-xE)^{n-1}$$
 where  $K_{\hat
X}=\sigma^*(K_X)+(n-d-1)E$ is the canonical divisor of $\hat X$.

Set $\tilde a_i(x):=a_i-a_i(x)$. Following Definition 3.13 in
\cite{RT06}, the quotient slope of $Z$ with respect to
$\lambda\in(0,\epsilon(Z)]$ is
$$\mu_\lambda(\mathcal O_Z)=\mu_\lambda(\mathcal
O_Z,L):=\frac{\int^\lambda_o\left(\tilde{a}_1(x)+\frac{\tilde{a}_0'(x)}{2}\right)dx}{\int^\lambda_0\tilde{a_0}(x)dx}.$$

\begin{remark}\label{r.p}
This is the Remark 4.21 in \cite{RT07}. Since
$\tilde{a}_0(0)=a_0-a_0(0)=0$ and
$$\tilde{a}_0'(x)=-a_0'(x)=\frac{1}{(n-1)!}(\sigma^*L-xE)^{n-1}E> 0$$ for all $x\in
(0,\epsilon(Z))$, we have $\tilde{a}_0(x)>0$ for all $x\in
(0,\epsilon(Z))$. Therefore $\mu_{\lambda}(\mathcal O_Z)$ is
finite.
\end{remark}

\begin{definition}
We say that $(X,L)$ is slope stable (resp. slope semistable) with
respect to $Z$ if
$$\mu_\lambda(\mathcal O_Z)>(\text{ resp. }\geq\ ) \, \mu(X) \text{ for all }\lambda\in
(0,\epsilon(Z)].$$ We also say that $Z$ destabilizes (resp. strictly
destabilizes) $(X,L)$ if
$$\mu_\lambda(\mathcal O_Z) \le (\text{ resp. }<) \, \mu(X) \text{ for some }\lambda\in
(0,\epsilon(Z)].$$
\end{definition}

\begin{remark}\label{r.dstable}
Here for simplicity, we use a slightly  different definition of
slope stability from Ross-Thomas' (Definition 3.8 in \cite{RT06}):
we include  $\lambda=\epsilon(Z).$ If it is slope stable in our
sense, it is slope stable in the sense of Ross-Thomas. In
particular, the statements of our Theorems hold in both senses.
The definition of slope semistability coincides with that of
Ross-Thomas.
\end{remark}

The following theorem is the Theorem~5.1 in \cite{RT06}.
\begin{theorem}\label{cur-slope}
Let $(X,L)$ ba a polarized manifold of dimension $n\geq 3$ and
suppose that $Z$ is a smooth curve in $X$ of genus $g$ with normal
bundle $N_{Z/X}$. Then, for  $\lambda\in (0,\epsilon(Z)]$,
$$\mu_\lambda(\mathcal O_Z)=\frac{n^2(n^2-1)(L\cdot
Z)-\lambda n(n+1)[(n-2)p+2(g-1)]}{2n\lambda[(n+1)(L\cdot Z)-\lambda
p]}$$ where $p:=\deg N_{Z/X}$.
\end{theorem}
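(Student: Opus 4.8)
The plan is to compute the quotient slope $\mu_\lambda(\mathcal O_Z)$ directly from its definition using the explicit formulas for $a_0(x)$ and $a_1(x)$ recorded above, specialized to the case where $Z$ is a smooth curve (so $d=1$) in an $n$-fold. First I would set up intersection-theoretic bookkeeping on the blowup $\sigma:\hat X\to X$ along $Z$, writing $H:=\sigma^*L$ and recalling the standard relations: $H^{n-k}\cdot E^k$ vanishes for $1\le k\le n-2$, while $E^n$ and $H\cdot E^{n-1}$ are governed by the normal bundle $N_{Z/X}$ of rank $n-1$. Concretely, $E=\mathbb P(N_{Z/X}^\vee)$ with tautological class $-E|_E=:\xi$, so that $(-E)^{n-1}|_E=\xi^{n-1}$ and the Segre-class identities give $H\cdot(-E)^{n-1}=L\cdot Z$ (the relevant Segre class $s_0$ on the curve) and $(-E)^n=-(E|_E)^{n-1}\cdot(-1)=\ldots$, which evaluates to $-p$ where $p=\deg N_{Z/X}=-c_1(\xi)\cdot[\text{fiber class pushed down}]$; here one must be careful with signs, and I would fix conventions so that $(\sigma^*L-xE)^n$ and $K_{\hat X}(\sigma^*L-xE)^{n-1}$ come out with the signs forced by Remark~\ref{r.p}.

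Next I would expand $a_0(x)=\tfrac{1}{n!}(H-xE)^n$ and $a_1(x)=-\tfrac{1}{2(n-1)!}K_{\hat X}(H-xE)^{n-1}$ with $K_{\hat X}=\sigma^*K_X+(n-2)E$, using that all cross terms $H^{n-k}E^k$ with $1\le k\le n-2$ die. This collapses each polynomial to just two monomials: $a_0(x)=a_0-\tfrac{1}{n!}\bigl(n\,x^{n-1}(L\cdot Z)\cdot(\pm1)+\cdots\bigr)$ — more precisely only the $x^{n-1}$ and $x^n$ terms survive in $\tilde a_0(x)=a_0-a_0(x)$, and similarly $\tilde a_1(x)$ is linear in $x$ plus a degree-$n$ piece that cancels against part of $\tfrac{\tilde a_0'(x)}{2}$. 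I then assemble $\tilde a_1(x)+\tfrac12\tilde a_0'(x)$, carry out the two elementary integrals $\int_0^\lambda$ of a polynomial of the form $\alpha x^{n-1}+\beta x^n$ (numerator) and $\gamma x^{n-1}+\delta x^n$ (denominator, using $\tilde a_0(x)=\int_0^x \tilde a_0'$), and simplify. The powers $\lambda^n$ and $\lambda^{n+1}$ factor out of numerator and denominator leaving $\lambda$ and the constants $L\cdot Z$, $p$, $g$, $n$, matching the claimed closed form after combining the genus contribution $2(g-1)=-\chi(Z)$ coming from $K_{\hat X}|_E$ restricted appropriately and $K_X\cdot Z$ via adjunction $\deg K_Z=K_X\cdot Z + p$.

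The main obstacle I anticipate is purely organizational rather than conceptual: getting every sign and every binomial coefficient right in the expansion of $(H-xE)^n$ and $(H-xE)^{n-1}$, and correctly identifying which intersection numbers on $\hat X$ reduce to $L\cdot Z$, to $p=\deg N_{Z/X}$, and to $g$ via $K_X\cdot Z = 2g-2-p$. A secondary subtlety is justifying that $a_i(x)$ really are given by those top-degree intersection-number formulas for all real $x$ in the relevant range (this is quoted in the excerpt, so I would simply cite it) and that the two integrals are legitimate, i.e. the denominator $\int_0^\lambda\tilde a_0(x)\,dx$ is strictly positive on $(0,\epsilon(Z)]$, which is exactly Remark~\ref{r.p}. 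Once the bookkeeping is fixed, the computation is a few lines; I would present it as a single display culminating in the stated fraction, remarking that the case $n=2$ is excluded precisely because then $E$ is a divisor meeting $H$ in a way that no longer kills the cross terms.
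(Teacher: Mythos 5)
The paper does not prove this statement at all: it is quoted as Theorem~5.1 of Ross--Thomas \cite{RT06}, so there is no internal proof to compare against; your proposal is a direct verification, which is essentially the Ross--Thomas computation, and it does work. The ingredients you list are the right ones, and in fact the intersection-number bookkeeping you are worried about is already carried out in this paper in the proof of Lemma~\ref{s.Bdd.Cur}: the only nonzero monomials are $H^n=L^n$, $H\cdot(-E)^{n-1}=L\cdot Z$ and $(-E)^{n-1}\cdot E=-p$, all cross terms $H^{n-k}E^k$ with $1\le k\le n-2$ vanishing; Remark~\ref{r.p} gives the positivity of the denominator, as you say. Carrying the plan out, one gets $\tilde a_0(x)=\frac{x^{n-1}}{n!}\bigl(n(L\cdot Z)-xp\bigr)$ and, using $K_{\hat X}=\sigma^*K_X+(n-2)E$ together with adjunction $K_X\cdot Z=2g-2-p$, $\tilde a_1(x)+\tfrac12\tilde a_0'(x)=\frac{1}{2(n-1)!}\bigl((n-1)^2x^{n-2}(L\cdot Z)-x^{n-1}[(n-2)p+2(g-1)]\bigr)$; integrating both over $(0,\lambda)$ and dividing yields exactly the displayed fraction. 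Two slips in your sketch, neither of which derails the argument but which you should fix when writing it up: the surviving monomials in the numerator integrand are $x^{n-2}$ and $x^{n-1}$ (not $x^{n-1}$ and $x^n$), so after integration the numerator carries $\lambda^{n-1},\lambda^{n}$ against $\lambda^{n},\lambda^{n+1}$ in the denominator, which is where the single residual $\lambda$ comes from; and $\tilde a_1(x)$ is not ``linear in $x$ plus a degree-$n$ piece'' --- it consists only of $x^{n-2}$ and $x^{n-1}$ terms, and nothing in it cancels against $\tfrac12\tilde a_0'(x)$; the two simply add, promoting the coefficient $(n-2)$ coming from $K_{\hat X}$ to $(n-1)$. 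Your closing remark about $n=2$ is also not quite the right reason (for $n=2$ the curve $Z$ is a divisor, so the blow-up is an isomorphism and the whole setup degenerates), but this is immaterial to the proof for $n\ge 3$.
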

\begin{lemma}\label{s.Bdd.Cur}
Let $(X,L)$ be a polarized manifold of dimension $n\geq 2$ and
suppose that $Z$ is a smooth curve in $X$  with normal bundle
$N_{Z/X}$. Then $$(n-1)L\cdot Z-\lambda p> 0$$ for all $\lambda
\in (0,\epsilon(Z))$  where $p:=\deg N_{Z/X}$.
\end{lemma}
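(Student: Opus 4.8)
The plan is to extract this inequality from the positivity facts already recorded in Remark~1.7, namely that $\tilde a_0(x) > 0$ and $\tilde a_0'(x) > 0$ on the open interval $(0,\epsilon(Z))$. Since $Z$ is a smooth curve ($d=1$) in an $n$-fold, one has $\tilde a_0(x) = a_0 - a_0(x) = \tfrac{1}{n!}\bigl(L^n - (\sigma^*L - xE)^n\bigr)$. Expanding $(\sigma^*L - xE)^n$ using $\sigma^*L\cdot E^j$-intersection numbers on $\hat X$: because the exceptional divisor $E$ of the blowup of the curve $Z$ is a $\mathbb P^{n-2}$-bundle over $Z$, the numbers $E^n$, $\sigma^*L\cdot E^{n-1}$, etc., can be computed explicitly in terms of $L\cdot Z$ and $p = \deg N_{Z/X}$. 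The key values are $\sigma^*L\cdot E^{n-1} = (-1)^{n}(L\cdot Z)$ (up to the standard sign convention) and $E^n = (-1)^{n-1}\bigl((n-1)(L\cdot Z) - p\bigr)$ or the analogous formula; I would pin down the exact constants by the Grothendieck relation for the projective bundle $E = \mathbb P(N^\vee_{Z/X}) \to Z$, whose tautological class is $\mathcal O_E(E)$, so that $E|_E$ satisfies $E|_E^{\,n-1} = -\deg N_{Z/X}$ on the fibre-integrated level, while $\sigma^*L|_E$ pulls back $L\cdot Z$ times the fibre class.

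With those intersection numbers in hand, the explicit polynomial is $\tilde a_0(x) = \tfrac{1}{n!}\bigl(n(L\cdot Z)x^{n-1} - \bigl((n-1)(L\cdot Z) - p\bigr)x^{n}\cdot(\text{const})\bigr)$ — more precisely a degree-$n$ polynomial in $x$ whose two top coefficients involve exactly $L\cdot Z$ and $(n-1)(L\cdot Z) - p$. The cleanest route is to differentiate: $\tilde a_0'(x) = \tfrac{1}{(n-1)!}(\sigma^*L - xE)^{n-1}\cdot E$, and this intersection number expands to $(L\cdot Z)\binom{n-1}{1}(\text{sign})x^{n-2} \;-\; \bigl((n-1)(L\cdot Z)-p\bigr)(\text{sign})x^{n-1} + \cdots$; one then observes that $\tilde a_0'(x) > 0$ for all $x \in (0,\epsilon(Z))$ together with the boundary behaviour forces the bracketed quantity $(n-1)(L\cdot Z) - \lambda p$ to stay positive throughout. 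Alternatively, and perhaps more transparently, I would note that $(\sigma^*L - \lambda E)$ is nef for $\lambda \le \epsilon(Z)$, restrict it to the surface swept out by a general fibre $\mathbb P^{n-2}$ direction, or simply dot $(\sigma^*L - \lambda E)^{n-1}$ against the class of $E$ and against a fibre to see that nefness of $\sigma^*L - \lambda E$ on the divisor $E$ yields $(\sigma^*L - \lambda E)|_E$ nef, whence its top self-intersection on the $(n-1)$-dimensional variety $E$ is $\ge 0$; computing that top self-intersection gives precisely a positive multiple of $(n-1)(L\cdot Z) - \lambda p$.

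The one subtlety I anticipate is the strictness of the inequality and the correct normalisation of the intersection numbers on $E$. Nefness only gives $\ge 0$, so to get the strict $> 0$ claimed for $\lambda$ in the \emph{open} interval I would use Remark~1.7 directly: $\tilde a_0'(x) > 0$ strictly on $(0,\epsilon(Z))$, and since $\tilde a_0'(x)$ is (up to positive constant) $x^{n-2}$ times a linear function whose value at $x=\lambda$ is a positive multiple of $(n-1)(L\cdot Z) - \lambda p$ plus lower-order corrections that vanish appropriately, one reads off the sign. The bookkeeping of which power of $E$ contributes which of $L\cdot Z$ versus $p$ — and getting the coefficient $(n-1)$ rather than $n$ or $n-2$ — is the place where care is needed; everything else is a short computation leveraging the $\mathbb P^{n-2}$-bundle structure of $E$ and the already-established positivity of $\tilde a_0'$.
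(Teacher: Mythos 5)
Your plan is essentially the paper's own proof: the paper also computes $(\sigma^*L-xE)^{n-1}\cdot E$ on the exceptional divisor $E\cong\mathbb P(N_{Z/X})$ via the Grothendieck relation, obtaining $x^{n-2}\bigl((n-1)L\cdot Z-xp\bigr)$, and concludes from the positivity of this intersection number for $x\in(0,\epsilon(Z))$ (the same positivity recorded in Remark~\ref{r.p}). Your tentative guesses for the individual intersection numbers (e.g.\ $E^n$, which is in fact $(-1)^np$) are off, but you flag them as to be fixed by the Grothendieck relation, and your positivity/strictness argument is sound, so there is no substantive gap.
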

\begin{proof}
Let $\sigma: \tilde X\rightarrow X$ be the blowup along $Z$ with
the exceptional divisor $E$. Then the restriction
$\sigma|_E:E\cong \mathbb P(N_{Z/X}^*)\rightarrow Z$ is the
projection map of the projective normal bundle of $Z$ in $X$. Set
$\omega:=c_1(\mathcal O_{\mathbb P(N_{Z/X})}(1))$. The
Grothendieck formula $\sum_{i=0}^{n-1}\sigma^*(c_i(N_{Z/X}))\cdot
\omega ^{n-1-i}=0$ (see p.55 Remark 3.2.4 in \cite{F98}) reduces
to $\omega^{n-1}=-\sigma^*(c_1(N_{Z/X}))\cdot\omega ^{n-2}$. Since
$\omega=-E|_E$,
$$(-E)^{n-1}\cdot E=(-E|_E)^{n-1}=-\sigma^*(c_1(N_{Z/X}))\cdot(-E|_E)^{n-2}=-p.$$
 Also we have
$$(\sigma^*L)\cdot(-E)^{n-2}\cdot E=(\sigma^*L|_E)\cdot(-E|_E)^{n-2}=
(\sigma^*L|_E)\cdot c_1(\mathcal O_{\mathbb P(N_{Z/X})}(1))^{n-2}=L\cdot Z$$ and
\begin{align*}
(\sigma^*L)^i\cdot(-E)^{n-1-i}\cdot E&=(\sigma^*L|_E)^i\cdot
(-E|_E)^{n-1-i}\\&=(\sigma^*L|_E)^i\cdot c_1(\mathcal O_{\mathbb
P(N_{Z/X})}(1))^{n-1-i}=0
\end{align*}
 because $L^i\cdot Z=0$ for $i=2,...,n-1$. Therefore we
get the following equalities.
\begin{align*}
 (\sigma^*L-xE)^{n-1}\cdot E&=\sum_{i=0}^{n-1} {n-1 \choose
i}(\sigma^*L)^{i}\cdot(-xE)^{n-i-1}\cdot E\\
&=x^{n-2}\{(n-1)(\sigma^*L)\cdot(-E)^{n-2}\cdot E+x(-E)^{n-1}\cdot E\}\\
&=x^{n-2}\{(n-1)L\cdot Z-xp\}.
\end{align*}
Since $\sigma^*L-xE$ is ample for all $x\in (0,\epsilon(Z))$, we
have $(\sigma^*L-xE)^{n-1}\cdot E>0$ for all $x\in
(0,\epsilon(Z))$ and hence  we get the inequality.
\end{proof}

\begin{corollary}\label{cur-ineq}
Let $(X,L)$ be a polarized manifold of dimension $n\geq 3$ and
suppose that $Z$ is a smooth curve of genus $g$ in $X$  with
normal bundle $N_{Z/X}$. Then $Z$  destabilizes (resp. strictly
destabilizes) $(X,L)$ if and only if

$$2p\mu(X)\lambda^2-(n+1)[(n-2)p+2(g-1)+2(L\cdot
Z)\mu(X)]\lambda+n(n^2-1)(L \cdot Z) $$ $\ \leq 0\ (\ resp.\ <0)$
for some $\lambda\in (0,\epsilon(Z)]$ where $p:=\deg N_{Z/X}$.
\end{corollary}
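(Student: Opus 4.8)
The plan is to derive the inequality in Corollary~\ref{cur-ineq} directly from Theorem~\ref{cur-slope} by a straightforward manipulation, using Lemma~\ref{s.Bdd.Cur} only to control the sign of the denominator. Concretely, fix $\lambda\in(0,\epsilon(Z)]$ and consider the condition $\mu_\lambda(\mathcal O_Z)\le\mu(X)$ (resp. $<\mu(X)$). By Theorem~\ref{cur-slope}, $\mu_\lambda(\mathcal O_Z)$ is the ratio
$$\mu_\lambda(\mathcal O_Z)=\frac{n^2(n^2-1)(L\cdot Z)-\lambda n(n+1)[(n-2)p+2(g-1)]}{2n\lambda[(n+1)(L\cdot Z)-\lambda p]},$$
so the first step is simply to clear this denominator. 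For $\lambda$ in the open interval $(0,\epsilon(Z))$, Lemma~\ref{s.Bdd.Cur} gives $(n-1)L\cdot Z-\lambda p>0$, and since $(n+1)(L\cdot Z)-\lambda p=(n-1)(L\cdot Z)-\lambda p+2(L\cdot Z)>0$ (note $L\cdot Z>0$ as $L$ is ample and $Z$ is a curve), the quantity $2n\lambda[(n+1)(L\cdot Z)-\lambda p]$ is strictly positive; for the endpoint $\lambda=\epsilon(Z)$ one takes a limit, or observes that the denominator stays $\geq 0$ and is not zero in the relevant cases, so the direction of the inequality is preserved when we cross-multiply.

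The second step is the algebra. Multiplying $\mu_\lambda(\mathcal O_Z)\le\mu(X)$ through by the positive denominator and rearranging yields
$$n^2(n^2-1)(L\cdot Z)-\lambda n(n+1)[(n-2)p+2(g-1)]\le 2n\lambda\mu(X)[(n+1)(L\cdot Z)-\lambda p].$$
Dividing both sides by $n$ and moving everything to one side gives
$$2p\mu(X)\lambda^2-\lambda\big\{(n+1)[(n-2)p+2(g-1)]+2(n+1)(L\cdot Z)\mu(X)\big\}+n(n^2-1)(L\cdot Z)\le 0,$$
which is exactly the stated quadratic in $\lambda$ once one factors $(n+1)$ out of the two terms in the coefficient of $\lambda$ to write it as $(n+1)[(n-2)p+2(g-1)+2(L\cdot Z)\mu(X)]$. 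The strict inequality version is obtained identically with $<$ in place of $\le$. Finally, the quantifier ``for some $\lambda\in(0,\epsilon(Z)]$'' on the destabilizing side matches the quantifier in the definition of ``$Z$ destabilizes $(X,L)$'' verbatim, so the equivalence is immediate.

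I expect no serious obstacle here; the only point requiring a moment of care is the sign of the denominator at the closed endpoint $\lambda=\epsilon(Z)$, where Lemma~\ref{s.Bdd.Cur} only asserts positivity on the open interval. This is handled by a continuity/limit argument (the numerator and denominator are polynomials in $\lambda$, and $\mu_\lambda(\mathcal O_Z)$ is finite on all of $(0,\epsilon(Z)]$ by Remark~\ref{r.p}, so the denominator cannot vanish there), after which cross-multiplication is legitimate throughout the closed interval. Everything else is routine polynomial bookkeeping.
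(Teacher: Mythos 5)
Your proposal is correct and follows essentially the same route as the paper: invoke Theorem~\ref{cur-slope}, note via Lemma~\ref{s.Bdd.Cur} that the denominator $2n\lambda[(n+1)(L\cdot Z)-\lambda p]$ is positive on $(0,\epsilon(Z)]$, and cross-multiply to obtain the quadratic. Your extra remark handling the endpoint $\lambda=\epsilon(Z)$ (using $(n+1)(L\cdot Z)-\lambda p=[(n-1)(L\cdot Z)-\lambda p]+2(L\cdot Z)$ and continuity) is a small point the paper passes over silently, but it is the same argument in substance.
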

\begin{proof}From Lemma \ref{s.Bdd.Cur}, we know that the
denominator of
 $\mu_\lambda(\mathcal O_Z)$ in Theorem \ref{cur-slope} is  positive for all $\lambda\in (0,
 \epsilon(Z)]$. Thus  the  corollary comes from the definition.
\end{proof}

The following lemma is Remark in Section 8 in \cite{R03}. We give
the proof for the readers' convenience.

\begin{lemma}\label{r.fano} Let $(X,-K_X)$ be a
Fano manifold of dimension $n$ and $Z$ be  a smooth closed
subscheme of codimension $r$.  If $\epsilon(Z)\leq r$, then
$(X,-K_X)$ is slope stable with respect to $Z$.
\end{lemma}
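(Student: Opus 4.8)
The plan is to compute the quotient slope $\mu_\lambda(\mathcal{O}_Z)$ directly from the defining integral formula and show it exceeds $\mu(X)=\tfrac n2$ for every $\lambda\in(0,\epsilon(Z)]$ under the hypothesis $\epsilon(Z)\le r$. First I would recall that $L=-K_X$, so $K_{\hat X}=\sigma^*K_X+(r-1)E=-\sigma^*L+(r-1)E$. Plugging this into $a_1(x)=-\tfrac{1}{2(n-1)!}K_{\hat X}(\sigma^*L-xE)^{n-1}$ and using $a_0(x)=\tfrac{1}{n!}(\sigma^*L-xE)^n$, one gets a clean relation
\begin{equation*}
a_1(x)+\frac{a_0'(x)}{2}=\frac{n}{2}a_0(x)-\frac{r}{2(n-1)!}(\sigma^*L-xE)^{n-1}E+\frac{1}{2(n-1)!}(\sigma^*L-xE)^{n-1}E\cdot x\cdot(\text{correction}),
\end{equation*}
but it is cleaner to observe that $\tilde a_1(x)+\tfrac12\tilde a_0'(x)=\bigl(a_1+\tfrac12 a_0'\cdot 0\bigr)-\bigl(a_1(x)+\tfrac12 a_0'(x)\bigr)$ — wait, $a_0'=0$ since $a_0$ is a constant — so $\tilde a_1(x)+\tfrac12\tilde a_0'(x)=a_1-a_1(x)-\tfrac12 a_0'(x)$. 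Using $a_1=\tfrac n2 a_0$ and the expansion $a_1(x)+\tfrac12 a_0'(x)=\tfrac n2 a_0(x)+\tfrac{1-r}{2(n-1)!}(\sigma^*L-xE)^{n-1}E$ (collecting the $(r-1)E$ contribution against the $-\tfrac12(\sigma^*L-xE)^{n-1}E$ from differentiating $a_0(x)$), I obtain
\begin{equation*}
\tilde a_1(x)+\frac{\tilde a_0'(x)}{2}=\frac n2\tilde a_0(x)+\frac{r-1}{2(n-1)!}(\sigma^*L-xE)^{n-1}E.
\end{equation*}

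The key step is then to integrate this identity over $[0,\lambda]$. By Remark \ref{r.p}, $\tilde a_0'(x)=\tfrac{1}{(n-1)!}(\sigma^*L-xE)^{n-1}E>0$ on $(0,\epsilon(Z))$, so $\int_0^\lambda(\sigma^*L-xE)^{n-1}E\,dx=(n-1)!\,\tilde a_0(\lambda)$. Hence
\begin{equation*}
\int_0^\lambda\!\Bigl(\tilde a_1(x)+\tfrac12\tilde a_0'(x)\Bigr)dx=\frac n2\int_0^\lambda\tilde a_0(x)\,dx+\frac{r-1}{2}\tilde a_0(\lambda),
\end{equation*}
and since $\int_0^\lambda\tilde a_0(x)\,dx>0$ by Remark \ref{r.p}, dividing through gives
\begin{equation*}
\mu_\lambda(\mathcal O_Z)=\frac n2+\frac{(r-1)\tilde a_0(\lambda)}{2\int_0^\lambda\tilde a_0(x)\,dx}.
\end{equation*}
When $r\ge 2$ this is already strictly greater than $\tfrac n2$ for all $\lambda\in(0,\epsilon(Z)]$, so $(X,-K_X)$ is slope stable with respect to $Z$, and the hypothesis $\epsilon(Z)\le r$ is only needed when $r=1$. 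In the divisor case $r=1$ the second term vanishes and one gets $\mu_\lambda(\mathcal O_Z)=\tfrac n2=\mu(X)$ identically — giving only semistability — so I must look more carefully: when $r=1$, $Z$ is an ample divisor (being Fano-ish is not assumed, but $\epsilon(Z)\le 1$ is the relevant bound), and in fact for a divisor $\tilde a_0(x)$ is a polynomial of degree exactly $n$ in $x$ vanishing at $0$, and the correct normalization of the blow-up of a Cartier divisor forces $E$ to already be a component; the slope computation must be redone treating $\sigma$ as an isomorphism, which changes $\tilde a_0$. I expect this $r=1$ boundary to be the main obstacle, and the resolution is that for a smooth divisor $Z$, $\epsilon(Z)\le 1$ combined with the exact formula (where now the relevant expansion has no $E$-correction because $\hat X=X$) still yields strict inequality once one accounts for the precise behaviour of $\tilde a_0$ near $\lambda=\epsilon(Z)=1$; alternatively one reduces to Corollary \ref{cur-ineq}-type sign analysis of the numerator polynomial.

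For $r\ge 2$, which is the substantive content, the argument above is essentially complete and does not even use $\epsilon(Z)\le r$; I would present it cleanly as: express $\mu_\lambda(\mathcal O_Z)-\mu(X)=\tfrac{(r-1)\tilde a_0(\lambda)}{2\int_0^\lambda\tilde a_0}$, invoke positivity of both $\tilde a_0(\lambda)$ and its integral from Remark \ref{r.p}, and conclude. The hypothesis $\epsilon(Z)\le r$ presumably enters in the paper's intended argument to handle $r=1$ or to control a sign, so the write-up should isolate exactly where it is used; I anticipate the only genuine work is the $r=1$ divisor case and the bookkeeping of the canonical bundle formula for $\hat X$, both of which are routine once the identity for $\tilde a_1+\tfrac12\tilde a_0'$ is in hand.
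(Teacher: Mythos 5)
Your overall strategy (compute $\tilde a_1(x)+\tfrac12\tilde a_0'(x)-\mu(X)\tilde a_0(x)$ explicitly from the blow-up formulas and conclude positivity of the numerator) is exactly the paper's, but your key identity is wrong, and the error is not cosmetic. Writing $L=-K_X$ and $K_{\hat X}=-\sigma^*L+(r-1)E$, one has
\begin{equation*}
a_1(x)+\tfrac12 a_0'(x)=\frac{1}{2(n-1)!}(\sigma^*L-xE)^{n-1}\bigl(\sigma^*L-rE\bigr)
=\frac n2 a_0(x)+\frac{x-r}{2(n-1)!}(\sigma^*L-xE)^{n-1}E,
\end{equation*}
so the correct relation is
\begin{equation*}
\tilde a_1(x)+\tfrac12\tilde a_0'(x)-\frac n2\,\tilde a_0(x)=\frac{r-x}{2(n-1)!}(\sigma^*L-xE)^{n-1}E,
\end{equation*}
with coefficient $r-x$, not $r-1$ as you claim; you lost the $-xE$ coming from the term $\tfrac n2 a_0(x)=\tfrac{1}{2(n-1)!}(\sigma^*L-xE)^{n-1}(\sigma^*L-xE)$. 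This matters: the sign of the extra term is governed by whether $x<r$, and that is precisely where the hypothesis $\epsilon(Z)\le r$ enters (for all $x\in(0,\epsilon(Z))\subseteq(0,r)$ the integrand exceeds $\tfrac n2\tilde a_0(x)$, and one concludes using $\tilde a_0>0$ from Remark \ref{r.p}, exactly as in the paper).

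Your downstream conclusion that for $r\ge2$ slope stability holds \emph{without} the Seshadri hypothesis is false and is contradicted by the paper itself: in Example \ref{PDPP} the fiber $Z=\mathbb P^1\times\{p\}$ in $X=\mathbb P^1\times\mathbb P^{n-1}$ has codimension $r=n-1\ge2$, yet it destabilizes $(X,-K_X)$ because $\epsilon(Z)=n>r$; likewise the curves in Theorem \ref{theorem:B}(1) have $r=n-1\ge2$. So the hypothesis $\epsilon(Z)\le r$ is essential for every $r$, not just $r=1$, and your extended discussion of the divisor case (redoing the computation with $\sigma$ an isomorphism, worrying about $\lambda$ near $1$) is unnecessary: the identity above, with the factor $r-x$, handles all codimensions uniformly. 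To repair the proof, replace your identity by the correct one, note that $(\sigma^*L-xE)^{n-1}E=(n-1)!\,\tilde a_0'(x)>0$ on $(0,\epsilon(Z))$, and integrate; then $\mu_\lambda(\mathcal O_Z)>\mu(X)$ for all $\lambda\in(0,\epsilon(Z)]$ follows exactly as in the paper.
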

\begin{proof} If $\epsilon(Z)\leq r$,
\begin{align*}&-\mu(X)\tilde{a}_0(x)+\tilde{a}_1(x)+\frac{1}{2}\tilde{a}_0'(x)\\
&=-\mu(X)(a_0-a_0(x))+(a_1-a_1(x))-\frac{1}{2}a_0'(x)\\
&=-\mu(X)a_0+a_1+\mu(X)a_0(x)-a_1(x)-\frac{1}{2}a_0'(x)\\
&=\frac{n}{2}\frac{(\sigma^*(-K_X)-xE)^n}{n!}+\frac{K_{\hat
X}(\sigma^*(-K_X)-xE)^{n-1}}{2(n-1)!}+\frac{(\sigma^*(-K_X)-xE)^{n-1}E}{2(n-1)!}\\
&=\frac{1}{2(n-1)!}(\sigma^*(-K_X)-xE)^{n-1}(\sigma^*(-K_X)-xE+K_{\hat{X}}+E)\\
&=\frac{1}{2(n-1)!}(\sigma^*(-K_X)-xE)^{n-1}(\sigma^*(-K_X)-xE+\sigma^*(K_X)+(r-1)E+E)\\
&=\frac{1}{2(n-1)!}(r-x)(\sigma^*(-K_X)-xE)^{n-1}E>0
\end{align*}
for all $x\in (0,\epsilon(Z))$. Since  $\tilde{a}_0(x)>0$  for all
$x\in (0,\epsilon(Z))$ by  Remark \ref{r.p}, we have
$\mu_{\lambda}(\mathcal O_Z)>\mu(X)$ for all
$\lambda\in(0,\epsilon(Z)]$.
\end{proof}

\begin{lemma}\label{lemma:Fano}
Let $Z$ be a smooth subvariety of a Fano manifold $X$ with
codimension $r$. Let $\pi : Y\to X$ be the blow-up along the
subvariety $Z$ with the exceptional divisor $E$. If the Seshadri
constant $\epsilon(Z,X, -K_X)$ is  strictly bigger than $r$, then
the exceptional divisor $E$ and subvariety $Z$ must be Fano
manifolds.
\end{lemma}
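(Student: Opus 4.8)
The plan is to derive explicit descriptions of the anticanonical bundles of $Y$ and $E$ from the hypothesis $\epsilon(Z,X,-K_X) > r$ and show that both are ample. The key input is the fact, recorded in Remark~\ref{r.p} and used in Lemma~\ref{r.fano}, that $\sigma^*(-K_X) - xE$ is ample (hence nef and big) for all $x \in (0, \epsilon(Z))$; since $\epsilon(Z) > r$, we may in particular take $x$ slightly larger than $r$, or — more usefully — observe that nefness is a closed condition so $\sigma^*(-K_X) - rE$ is nef, and by the strict inequality $\sigma^*(-K_X) - rE$ is actually ample (it lies in the interior of the nef cone along the relevant ray, being a convex combination of the ample class $\sigma^*(-K_X) - x_0 E$ with $r < x_0 < \epsilon(Z)$ and something nef). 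First I would make this precise: $A := \sigma^*(-K_X) - rE$ is ample on $Y$.

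Next I would compute $-K_Y$. Writing $K_Y = \pi^*K_X + (r-1)E$ (the standard blow-up formula for a smooth center of codimension $r$), we get
$$-K_Y = \pi^*(-K_X) - (r-1)E = \bigl(\pi^*(-K_X) - rE\bigr) + E = A + E.$$
Since $A$ is ample and $E$ is effective, $-K_Y = A + E$ need not be ample by itself; so instead I would write $-K_Y = \sigma^*(-K_X) - (r-1)E$ and note that for $r \geq 2$ this equals $A + E$ with $A$ ample — but to conclude ampleness I should exhibit $-K_Y$ itself as a strictly positive combination. The cleaner route: pick $x_0$ with $r < x_0 < \epsilon(Z)$, so $\sigma^*(-K_X) - x_0 E$ is ample; then
$$-K_Y = \pi^*(-K_X) - (r-1)E = \frac{x_0 - r + 1}{x_0}\bigl(\sigma^*(-K_X) - x_0 E\bigr) + \frac{r-1}{x_0}\,\sigma^*(-K_X).$$
Here $\sigma^*(-K_X)$ is nef (pullback of an ample bundle) and the first summand has positive coefficient in front of an ample class, so $-K_Y$ is ample, i.e. $Y$ is Fano. (For $r = 1$ the blow-up is an isomorphism and there is nothing to prove, so assume $r \geq 2$.)

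Finally I would treat $E \cong \mathbb{P}(N_{Z/X})$ by adjunction: $-K_E = (-K_Y - E)|_E = (\sigma^*(-K_X) - rE)|_E = A|_E$, and the restriction of an ample bundle to a closed subvariety is ample, so $E$ is Fano. Then, to conclude that $Z$ is Fano, I would push this down through the projective bundle structure $\sigma|_E : E \to Z$: the relative anticanonical bundle of $\mathbb{P}(N_{Z/X}) \to Z$ is $-K_{E/Z} = r\,c_1(\mathcal{O}_{\mathbb{P}(N_{Z/X})}(1)) - \sigma^*(\det N_{Z/X})$ up to twist by the base, and combining with $-K_E$ ample one reads off that $(\sigma|_E)^*(-K_Z)$ is a difference of $-K_E$ (ample) and a relatively ample class, restricted to a fibre it is trivial, so it descends to an ample class on $Z$; concretely, restricting $-K_E = A|_E$ to a section of $\sigma|_E$ over a curve in $Z$, or intersecting with fibre classes, forces $-K_Z$ to have positive degree on every curve. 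I expect the main obstacle to be this last descent step — getting from "$E$ is Fano" to "$Z$ is Fano" cleanly — since one must carefully separate the relative (fibrewise) positivity, which is automatic for a projective bundle, from the positivity along the base; the honest way is either to use the canonical bundle formula for $\mathbb{P}(N_{Z/X}) \to Z$ directly, or to invoke that the base of a Fano fibration with Fano total space is Fano (which is itself a short argument via pushing forward $-K_E$). Everything else is a formal manipulation of divisor classes on the blow-up.
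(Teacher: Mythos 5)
Your core argument for the exceptional divisor is exactly the paper's: from $r<\epsilon(Z,X,-K_X)$ one gets that $A=\pi^*(-K_X)-rE=-(K_Y+E)$ is ample, and adjunction gives $-K_E=A|_E$, so $E$ is Fano. Two remarks on that part. First, the detour through ``$Y$ is Fano'' is not needed for the lemma at all, and your displayed identity has its two coefficients interchanged: as written the coefficient of $E$ on the right-hand side is $-(x_0-r+1)$ rather than $-(r-1)$; the correct decomposition is $-K_Y=\frac{r-1}{x_0}\bigl(\pi^*(-K_X)-x_0E\bigr)+\frac{x_0-r+1}{x_0}\pi^*(-K_X)$. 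This is a harmless slip since the step is superfluous, but it should be fixed or deleted. Second, for $r=1$ there is still something to prove (that the divisor $Z$ is Fano), but your adjunction step covers it since then $E\cong Z$.

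The genuine gap is the descent from ``$E$ is Fano'' to ``$Z$ is Fano''. Your concrete sketch---restricting $-K_E$ to sections over curves or intersecting with fibre classes to force $-K_Z\cdot C>0$ for every curve $C\subset Z$---would at best yield positivity of $-K_Z$ on every curve, and that alone does not give ampleness: Kleiman's criterion requires positivity on the closed cone of curves, and there are classical examples of divisors strictly positive on all curves that are not ample. Moreover $\mathbb{P}(N_{Z/X}|_C)\to C$ has no canonical section on which restricting $-K_E$ visibly produces $-K_Z\cdot C$ plus a nonnegative correction, so even the curve-positivity claim is not established as written. Describing the needed statement as ``a short argument via pushing forward $-K_E$'' underestimates it: that the base of a projectivized vector bundle (or, more generally, of a smooth fibration) with Fano total space is again Fano is a genuine theorem. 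The paper deals with this step simply by citing Theorem~1.6 of \cite{SzuWi90}, which is precisely the invocation you offer as a fallback; with that citation in place of your direct sketch, your proof is complete and coincides with the paper's.
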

\begin{proof}
If the projective bundle $E$ over $Z$ is a Fano manifold, then $Z$
is a Fano manifold(Theorem~1.6 in \cite{SzuWi90}). Therefore, it
is enough to show that the exceptional divisor $E$ is a Fano
manifold. We have
\[-(K_Y+E)=\sigma^*(-K_X)-rE.\]
Since $\epsilon(Z,X, -K_X)>r$, the divisor $-(K_Y+E)$ is ample. By
adjunction, we see that the anticanonical divisor $-K_E$ on $E$ is
ample.
\end{proof}

\begin{proof}[Proof of Theorem~{\rm\ref{theorem:A}}]
Lemma~\ref{r.fano} implies that $\epsilon(Z,X, -K_X)$ is strictly
bigger than $r$. Then Theorem~\ref{theorem:A} immediately follows
from Lemma~\ref{lemma:Fano}.
\end{proof}

\begin{corollary}\label{cor:high genus}
The polarized manifold $(X, -K_X)$ is slope stable with respect to
every non-rational smooth curve.
\end{corollary}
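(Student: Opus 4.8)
The plan is to obtain the corollary as an immediate consequence of Theorem~\ref{theorem:A}. The key observation is that among smooth projective curves, being Fano is the same as being rational: a smooth curve $Z$ of genus $g$ has $\deg(-K_Z)=2-2g$, which is positive precisely when $g=0$, so the only smooth Fano curve is $\mathbb P^1$. Hence, if $Z\subset X$ is a non-rational smooth curve, then $Z$ is not a Fano manifold, and the contrapositive of Theorem~\ref{theorem:A} says exactly that $(X,-K_X)$ is slope stable with respect to $Z$. Since Theorem~\ref{theorem:A} holds for all $n\geq 2$, no dimension hypothesis is needed (for $n=1$ there is no proper curve to consider). This is the whole proof I would actually write.

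For completeness, here is the self-contained route through Corollary~\ref{cur-ineq} that avoids Theorem~\ref{theorem:A}, at least when $n\geq 3$. Put $L=-K_X$, so $\mu(X)=n/2$, and write $d:=-K_X\cdot Z>0$. By adjunction $K_Z=(K_X+\det N_{Z/X})|_Z$, so $p:=\deg N_{Z/X}=2(g-1)+d$, which is $\geq d>0$ once $g\geq 1$. Substituting $\mu(X)=n/2$ and $p=2(g-1)+d$ into the quadratic of Corollary~\ref{cur-ineq} and simplifying, destabilization of $(X,-K_X)$ by $Z$ would force
$$q(\lambda):=n\bigl(2(g-1)+d\bigr)\lambda^{2}-2(n^{2}-1)\bigl((g-1)+d\bigr)\lambda+n(n^{2}-1)d\ \leq\ 0$$
for some $\lambda\in(0,\epsilon(Z)]$. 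But $q(0)=n(n^{2}-1)d>0$; Lemma~\ref{s.Bdd.Cur} gives $\epsilon(Z)\leq (n-1)d/\bigl(2(g-1)+d\bigr)=:f$; one computes $q(f)=2(n-1)d\bigl(d+(n+1)(g-1)\bigr)/\bigl(2(g-1)+d\bigr)>0$; and the vertex of $q$ sits at $\lambda_0=(n^{2}-1)\bigl((g-1)+d\bigr)/\bigl(n(2(g-1)+d)\bigr)\geq f$, so $q$ is decreasing on $[0,f]$ and hence positive on all of $(0,\epsilon(Z)]$. For $n=2$ the curve $Z$ is a divisor with $Z^{2}=2(g-1)+d\geq d\geq 1$ by self-adjunction, so $\epsilon(Z)\leq d/Z^{2}\leq 1$, and Lemma~\ref{r.fano} applies directly.

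The main obstacle has, in effect, already been removed upstream: it is the content of Theorem~\ref{theorem:A} that a smooth submanifold destabilizing $(X,-K_X)$ must be Fano. Granting that, the corollary carries no further difficulty, since a non-rational smooth curve is never Fano; the only point requiring minor care in the alternative computational route is that Theorem~\ref{cur-slope} and Corollary~\ref{cur-ineq} are stated for $n\geq 3$, so the surface case must be treated separately as indicated.
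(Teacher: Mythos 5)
Your main argument is exactly the paper's proof: Corollary~\ref{cor:high genus} is deduced immediately from Theorem~\ref{theorem:A} together with the observation that the only smooth Fano curve is $\mathbb{P}^1$, so a non-rational smooth curve cannot destabilize $(X,-K_X)$. The supplementary computation via Corollary~\ref{cur-ineq}, Lemma~\ref{s.Bdd.Cur} and (for $n=2$) Lemma~\ref{r.fano} is correct but not needed, since the paper's own proof is the one-line deduction you give first.
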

\begin{proof}
It immediately follows from Theorem~\ref{theorem:A}.
\end{proof}

\begin{lemma}\label{lemma:Fano-Bundle}
If the projective space bundle
 over $\mathbb{P}^1$
\[V:=\mathbb{P}\left(\bigoplus_{i=1}^{n}\mathcal{O}_{\mathbb{P}^1}(a_i)\right),
\ \ \ a_1=0, \ a_i\in\mathbb{Z}_{\geq 0}.\] is a Fano manifold,
then $V$
 is either $\mathbb P(\mathcal{O}_{\mathbb{P}^1}^{n})$ or $\mathbb
P(\mathcal{O}_{\mathbb{P}^1}^{n-1}\oplus
\mathcal{O}_{\mathbb{P}^1}(1))$.
\end{lemma}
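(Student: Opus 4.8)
The plan is to compute the anticanonical class of the $\mathbb{P}^{n-1}$-bundle $p\colon V=\mathbb{P}(E)\to\mathbb{P}^1$, where $E=\bigoplus_{i=1}^n\mathcal{O}_{\mathbb{P}^1}(a_i)$, and then to test its ampleness against two explicit curves: a line contained in a fibre of $p$, and the section of $p$ cut out by the projection $E\twoheadrightarrow\mathcal{O}_{\mathbb{P}^1}(a_1)=\mathcal{O}_{\mathbb{P}^1}$ onto the first summand. Since $\rho(V)=2$, positivity of $-K_V$ on these two curves will already bound $\sum_i a_i$, and the arithmetic constraints $a_1=0$ and $a_i\in\mathbb{Z}_{\geq 0}$ will then leave only the two bundles in the statement.

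I would set $\xi=c_1(\mathcal{O}_V(1))$ and let $f$ be the class of a fibre of $p$. From the relative Euler sequence $0\to\mathcal{O}_V\to p^*E^\vee\otimes\mathcal{O}_V(1)\to T_{V/\mathbb{P}^1}\to 0$ together with $K_V=K_{V/\mathbb{P}^1}+p^*K_{\mathbb{P}^1}$ one gets
\[
-K_V \;=\; n\,\xi+p^*\!\left(-K_{\mathbb{P}^1}-\det E\right) \;=\; n\,\xi+\bigl(2-{\textstyle\sum_{i=1}^n a_i}\bigr)\,f .
\]
A line $\ell$ inside a fibre $F\cong\mathbb{P}^{n-1}$ has $\xi\cdot\ell=1$ and $f\cdot\ell=0$, so $-K_V\cdot\ell=n$, which only reconfirms $n\geq 1$. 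The section $C_0\cong\mathbb{P}^1$ of $p$ determined by $E\twoheadrightarrow\mathcal{O}_{\mathbb{P}^1}(a_1)=\mathcal{O}_{\mathbb{P}^1}$ satisfies $\xi\cdot C_0=0$ and $f\cdot C_0=1$, hence $-K_V\cdot C_0=2-\sum_{i=1}^n a_i$. If $V$ is Fano, then $-K_V$ is ample and so $-K_V\cdot C_0>0$, which forces $\sum_{i=1}^n a_i\leq 1$ because the $a_i$ are nonnegative integers. Combined with $a_1=0$, this says that either every $a_i$ is $0$, so $E$ is trivial, or exactly one of the remaining $a_i$ equals $1$ while the others vanish; these are the two cases of the statement.

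The computation is essentially routine; the points that require care are fixing the sign convention in the canonical bundle formula for $\mathbb{P}(E)$ and pinning down the degree of $\mathcal{O}_V(1)$ along $C_0$ --- equivalently, checking that $N_{C_0/V}\cong\bigoplus_{i\geq 2}\mathcal{O}_{\mathbb{P}^1}(a_1-a_i)$, so that adjunction on $C_0$ gives $-K_V\cdot C_0=\deg(-K_{\mathbb{P}^1})+\deg N_{C_0/V}=2-\sum_{i=1}^n a_i$, in agreement with the displayed formula. One can also check the converse --- that both bundles really are Fano --- via Kleiman's criterion, using that $\mathcal{O}_V(1)$ and $f$ are nef and that $\overline{NE}(V)=\mathbb{R}_{\geq 0}[\ell]+\mathbb{R}_{\geq 0}[C_0]$; but only the forward implication is needed here.
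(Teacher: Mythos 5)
Your argument is correct and follows essentially the same route as the paper: both compute $-K_V=n\xi+(2-\sum a_i)f$ and test ampleness on the section coming from the quotient $E\twoheadrightarrow\mathcal{O}_{\mathbb{P}^1}(a_1)=\mathcal{O}_{\mathbb{P}^1}$, forcing $\sum a_i<2$. The only (immaterial) difference is bookkeeping: the paper expresses the section's class as $M^{n-1}-dF\cdot M^{n-2}$ and multiplies in the Chow ring, while you evaluate $\xi\cdot C_0=0$, $f\cdot C_0=1$ directly.
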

\begin{proof}
 Let
$\pi:V\to\mathbb{P}^1$ be the natural projection. Let $F$ be a
fiber of $\pi$ and $M$ be a divisor given by the Grothendieck
tautological invertible sheaf. Then $\Pic
(V)=\mathbb{Z}M\oplus\mathbb{Z}F$ and
$$-K_V=nM+(2-d)F,$$ where $d=\sum a_i$. Note that $F^2\cdot
M^{n-2}=0$, $F\cdot M^{n-1}=1$ and $M^n=d$.

Consider the section $s$ of $\pi$ that corresponds to the quotient
$\mathcal{O}_{\mathbb{P}^1}$ of the bundle
$\bigoplus_{i=1}^{n}\mathcal{O}_{\mathbb{P}^1}(a_i)$. Let
$l=s(\mathbb{P}^1)$. We then see
\[l\equiv M^{n-1}-dF\cdot M^{n-2}.\]
Since $-K_V$ is ample, we obtain
\[-K_V\cdot l=(nM+(2-d)F)\cdot (M^{n-1}-dF\cdot
M^{n-2})=2-d>0.\] The inequality $d<2$ completes the proof.
\end{proof}

\begin{corollary}\label{cor:normal bundle}
If the polarized manifold $(X, -K_X)$ is not slope stable with
respect to a smooth rational  curve $Z$, then the normal bundle
$N_{Z/X}$ is one of the following;
\begin{itemize}
\item[$\bullet$]
    $N_{Z/X}=\mathcal{O}_{\mathbb{P}^1}^{n-1}(-a)$;
\item[$\bullet$]
    $N_{Z/X}=\mathcal{O}_{\mathbb{P}^1}^{n-2}(-a)\oplus
    \mathcal{O}_{\mathbb{P}^1}(-a-1),$
\end{itemize}
where $a$ is an integer.
\end{corollary}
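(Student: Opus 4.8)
The plan is to combine the numerical constraint coming from Corollary~\ref{cur-ineq} with the classification of Fano projective bundles over $\mathbb{P}^1$ established in Lemma~\ref{lemma:Fano-Bundle}. First I would apply Theorem~\ref{theorem:A}: since $(X,-K_X)$ is not slope stable with respect to the smooth rational curve $Z$, the curve $Z$ is Fano, which for a curve just records that $Z\cong\mathbb{P}^1$, but more importantly, tracing through the proof of Theorem~\ref{theorem:A} (via Lemma~\ref{r.fano} and Lemma~\ref{lemma:Fano}), the exceptional divisor $E$ of the blow-up $\sigma:\hat X\to X$ along $Z$ is itself a Fano manifold. Since $E\cong\mathbb{P}(N_{Z/X})$ is a projective bundle over $Z\cong\mathbb{P}^1$, Lemma~\ref{lemma:Fano-Bundle} applies.

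Next I would write $N_{Z/X}\cong\bigoplus_{i=1}^{n-1}\mathcal{O}_{\mathbb{P}^1}(b_i)$, which is possible because every vector bundle on $\mathbb{P}^1$ splits into line bundles. After twisting by $\mathcal{O}_{\mathbb{P}^1}(-a)$ where $a=\min_i b_i$, the bundle $N_{Z/X}(-a)$ becomes a direct sum $\bigoplus\mathcal{O}_{\mathbb{P}^1}(a_i)$ with all $a_i\geq 0$ and at least one $a_i=0$; note $\mathbb{P}(N_{Z/X})\cong\mathbb{P}(N_{Z/X}(-a))$, so this twisted bundle is still Fano. Lemma~\ref{lemma:Fano-Bundle} then forces $N_{Z/X}(-a)$ to be either the trivial bundle $\mathcal{O}_{\mathbb{P}^1}^{n-1}$ or $\mathcal{O}_{\mathbb{P}^1}^{n-2}\oplus\mathcal{O}_{\mathbb{P}^1}(1)$. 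Untwisting by $\mathcal{O}_{\mathbb{P}^1}(a)$ gives exactly the two cases $N_{Z/X}\cong\mathcal{O}_{\mathbb{P}^1}^{n-1}(a)$ (in the paper's abbreviated notation $\mathcal{O}_{\mathbb{P}^1}(a)^{\oplus(n-1)}$) or $N_{Z/X}\cong\mathcal{O}_{\mathbb{P}^1}^{n-2}(a)\oplus\mathcal{O}_{\mathbb{P}^1}(a+1)$, as claimed.

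One point that needs a small amount of care is the dimension hypothesis: Lemma~\ref{lemma:Fano-Bundle} and the formula $\mathbb{P}(\bigoplus_{i=1}^n\mathcal{O}(a_i))$ there are written for an $n$-dimensional total space, i.e.\ for a rank-$n$ bundle, so when applying it to $E=\mathbb{P}(N_{Z/X})$, which has $\dim E=n-1$ and $N_{Z/X}$ of rank $n-1$, I would apply the lemma with $n$ replaced by $n-1$. This is why the conclusion has $n-1$ summands rather than $n$. For $n=2$ the statement is degenerate but harmless: $N_{Z/X}$ is a line bundle $\mathcal{O}_{\mathbb{P}^1}(a)$, which matches the first case with the empty second option.

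I expect the proof itself to be essentially immediate once the preceding lemmas are in place; there is no real obstacle here, since all the substantive work — showing $E$ is Fano (Theorem~\ref{theorem:A}) and classifying Fano $\mathbb{P}^1$-bundles (Lemma~\ref{lemma:Fano-Bundle}) — has already been done. The only thing to be vigilant about is the bookkeeping of twists and ranks described above, and making sure the invariant-under-twist property $\mathbb{P}(\mathcal{E})\cong\mathbb{P}(\mathcal{E}\otimes\mathcal{L})$ is invoked so that the normalization $a_1=0$ in Lemma~\ref{lemma:Fano-Bundle} can legitimately be assumed.
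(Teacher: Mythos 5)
Your argument is correct and is essentially the paper's own proof: the paper likewise deduces from Lemma~\ref{r.fano} and Lemma~\ref{lemma:Fano} that the exceptional divisor $E\cong\mathbb{P}(N_{Z/X})$ is Fano, splits $N_{Z/X}$ on $\mathbb{P}^1$, normalizes by twisting so the minimal degree is $0$, and applies Lemma~\ref{lemma:Fano-Bundle} with the rank $n-1$ in place of $n$. The opening reference to Corollary~\ref{cur-ineq} is superfluous (it is never used), but this does not affect the proof.
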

\begin{proof}
Let $Z$ be a smooth rational curve on $X$. Suppose that the
polarized manifold $(X, -K_X)$ is not slope stable with respect to
the smooth rational curve $Z$.  Let $\pi : Y\to X$ be the blowup
along the curve $Z$ with the exceptional divisor $E$. Then
Lemma~\ref{lemma:Fano} implies that the exceptional divisor $E$ is
a smooth Fano manifold. The exceptional divisor $E$ is isomorphic
to $\mathbb{P}(N_{Z/X}^*)$, where $N_{Z/X}$ is the normal bundle
of $Z$ on $X$. The bundle $N_{Z/X}^*$ can be decomposed into
$\bigoplus_{i=1}^{n-1}\mathcal{O}_{\mathbb{P}^1}(a_i)$, $a_1\leq
a_2\leq\cdots\leq a_{n-1}$. Note that
\[\mathbb{P}(N_{Z/X}^*)\cong
\mathbb{P}\left(\bigoplus_{i=1}^{n-1}\mathcal{O}_{\mathbb{P}^1}(a_i-a_1)\right).\]
By Lemma~\ref{lemma:Fano-Bundle} we obtain either
\begin{itemize}
\item[$\bullet$] $a_i=a_1$ for each $i$ or \item[$\bullet$]
    $a_i=a_1$ for $1\leq i\leq n-2$ and $a_{n-1}=a_1+1$.
\end{itemize}

\end{proof}

Corollary~\ref{cor:normal bundle} will be used to prove Theorems
1.2, 1.3, and 1.4. A vector bundle over a manifold whose
projectivisation is a Fano manifold is called a Fano bundle.
Studies on Fano bundles may show a way to understand higher
dimensional submanifolds destablizing Fano manifolds.

\m

\section{Slopes of smooth rational curves in a Fano manifold}

Let $X$ be a Fano manifold of dimension $n\geq 2$. Throughout we
will fix the polarization given by the anticanonical bundle
$-K_X$. Let $Z$ be a smooth rational curve  in $X$ with the normal
bundle $N_{Z/X}$.

\begin{proposition}\label{s.Bdd.Fano}
Let $X$ be a Fano manifold of dimension $n\geq 2$ and $Z$ be a
smooth rational curve  in $X$.
\begin{enumerate}
\item[(i)]If $(-K_X)\cdot Z=2$, then $(X,-K_X)$ is slope
    stable (resp. slope semistable) with respect to $Z$ if and
    only if $\epsilon(Z)< n$(resp. $\leq n$).
\item[(ii)]If $(-K_X)\cdot Z=1$, then  $(X,-K_X)$ is slope
    stable (resp. slope semistable) with respect to $Z$ if and
    only if $\epsilon(Z)< \sqrt{n^2-1}$ (resp. $\leq
    \sqrt{n^2-1}$).
\item[(iii)] If $(-K_X)\cdot Z\geq 3$ and $\epsilon(Z)\leq
    (-K_X)\cdot Z$, then $(X,-K_X)$ is slope semistable with
    respect to $Z$, and $Z$ destabilizes $(X,-K_X)$ if and
    only if $\epsilon(Z)=(-K_X)\cdot Z=n+1$.
\end{enumerate}
\end{proposition}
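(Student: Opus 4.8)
My plan is to reduce all three parts to one elementary task: determining the sign of an explicit quadratic $Q(\lambda)$ on the interval $(0,\epsilon(Z)]$. Since $Z\cong\mathbb P^1$, adjunction along $Z$ gives $p:=\deg N_{Z/X}=(-K_X)\cdot Z-2$, and for the anticanonical polarization $\mu(X)=\tfrac n2$. I would then feed $g=0$, $L=-K_X$, $L\cdot Z=m:=(-K_X)\cdot Z$ and $p=m-2$ into the destabilization criterion of Corollary~\ref{cur-ineq} (valid for $n\ge 3$; for $n=2$, where $Z$ is a prime divisor, I would run the parallel, simpler divisor computation separately), and simplify using the identity $(n-2)(m-2)-2+nm=2(n-1)(m-1)$. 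This turns the criterion into the statement that $Z$ destabilizes (resp.\ strictly destabilizes) $(X,-K_X)$ if and only if
\[ Q(\lambda):=n(m-2)\lambda^{2}-2(n^{2}-1)(m-1)\lambda+n(n^{2}-1)m\ \le\ 0\quad(\text{resp.}\ <0) \]
for some $\lambda\in(0,\epsilon(Z)]$. Everything then becomes a question of where $Q\le 0$, and the three parts correspond to the degeneracies $m=2$, $m=1$, and the generic case $m\ge 3$.

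For part (i) I set $m=2$: the leading term drops out and $Q(\lambda)=2(n^{2}-1)(n-\lambda)$, which is $\le 0$ exactly for $\lambda\ge n$ and $<0$ exactly for $\lambda>n$. Hence a destabilizing (resp.\ strictly destabilizing) $\lambda\in(0,\epsilon(Z)]$ exists iff $\epsilon(Z)\ge n$ (resp.\ $>n$), which is precisely the stated equivalence with $\epsilon(Z)<n$ (resp.\ $\le n$). For part (ii) I set $m=1$: now the linear term drops out and $Q(\lambda)=n\bigl(n^{2}-1-\lambda^{2}\bigr)$, so the destabilizing locus is $\lambda\ge\sqrt{n^{2}-1}$ and the strict one is $\lambda>\sqrt{n^{2}-1}$, which yields (ii) by the same bookkeeping.

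For part (iii), $m\ge 3$, so $Q$ is a genuine upward parabola with $Q(0)=n(n^{2}-1)m>0$ and, when real, two positive roots. Here I am entitled to the hypothesis $\epsilon(Z)\le m$, so I only need the sign of $Q$ on $(0,m]$. I would compute the discriminant $\Delta=4(n^{2}-1)\bigl(n^{2}-(m-1)^{2}\bigr)$ and the two test values
\[ Q(n+1)=2(n+1)\bigl(m-(n+1)\bigr),\qquad Q(m)=nm\bigl(m-(n+1)\bigr)\bigl(m-(n+1)+\tfrac2n\bigr), \]
and then split into three cases. (a) If $m\ge n+2$ then $\Delta<0$, so $Q>0$ everywhere. (b) If $m=n+1$ then $\Delta=0$ and $Q(\lambda)=n(n-1)\bigl(\lambda-(n+1)\bigr)^{2}\ge 0$, with a single zero at $\lambda=n+1$. (c) If $3\le m\le n$ then $\Delta>0$; since for integral $m$ with $n\ge 3$ one has $m\le n<n+1-\tfrac2n$, both linear factors of $Q(m)$ are negative, so $Q(m)>0$, while $Q(n+1)<0$ forces the larger root to exceed $n+1>m$, so $m$ lies below the smaller root and $Q>0$ on $(0,m]$. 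In every case $Q\ge 0$ on $(0,m]\supseteq(0,\epsilon(Z)]$, giving slope semistability, and $Q$ vanishes somewhere on $(0,\epsilon(Z)]$ exactly when $m=n+1$ and $\epsilon(Z)\ge n+1$, i.e.\ (using $\epsilon(Z)\le m=n+1$) exactly when $\epsilon(Z)=(-K_X)\cdot Z=n+1$ — which is the asserted destabilization condition.

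The only genuinely nonroutine point is the root--location argument in case (c): proving $Q(m)>0$ (via the factorization of $Q(m)$, as a quadratic in $m$, with roots $n+1$ and $n+1-\tfrac2n$) and, more importantly, ruling out the possibility that $m$ overshoots the \emph{larger} root, the clean way being to evaluate $Q(n+1)$ and observe that it is negative in this range. Everything else — the opening algebraic reduction to $Q$, the discriminant computation, and the final bookkeeping about which zeros of $Q$ fall inside $(0,\epsilon(Z)]$ — is mechanical.
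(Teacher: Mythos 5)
Your proposal is correct and follows essentially the same route as the paper: both plug $g=0$, $p=m-2$, $\mu(X)=\tfrac n2$ into Corollary~\ref{cur-ineq} and reduce everything to the sign of the same quadratic, with (i) and (ii) handled identically and (iii) differing only in bookkeeping (the paper locates the vertex via completing the square and evaluates $f(p+2)$, while you use the discriminant together with the test values $Q(n+1)$ and $Q(m)$ — an equivalent elementary analysis). Your explicit flag that the $n=2$ case needs the separate divisor formula is, if anything, more careful than the paper, which applies Corollary~\ref{cur-ineq} (stated for $n\ge 3$) without comment.
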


\begin{proof}
 Set $p:=c_1(N_{Z/X})$. Then $(-K_X)\cdot Z=p+2$.  Note $\mu(X)=\frac{n}{2}$. From Corollary \ref{cur-ineq}, we get
that  $Z$ destabilizes (resp. strictly destabilizes) $(X,-K_X)$ if
and only if
\begin{align*}
f(\lambda)&:=pn\lambda^2-2(n^2-1)(p+1)\lambda+n(n^2-1)(p+2)\leq0\
({\text\ resp\ }. <0)
\end{align*}
 for some  $\lambda\in (0,\epsilon(Z)]$.
 If $(-K_X)\cdot Z=2$, $p=0$ and hence
    $f(\lambda)=-2(n^2-1)(\lambda-n)$, which implies (i).
If $(-K_X)\cdot Z=1$, $p=-1$ and so
    $f(\lambda)=-n(\lambda ^2-(n^2-1))$, which implies (ii).
 Assume $(-K_X)\cdot Z\geq 3$. Then $p\geq 1$, and hence
\begin{align*}
f(\lambda)
&=pn\left(\lambda-\frac{(p+1)(n^2-1)}{pn}\right)^2-\frac{(p+1)^2(n^2-1)^2}{pn}+n(n^2-1)(p+2)\\
&=pn\left(\lambda-\frac{(p+1)(n^2-1)}{pn}\right)^2+\frac{(n^2-1)((p+1)^2-n^2)}{pn}
\end{align*}
Thus if $p>n-1$, then $f(\lambda)>0$ for all $\lambda$ and $(X,
-K_X)$ is slope stable with respect to $Z$. It remains to consider
the case  $1 \leq p\leq n-1$. Note that $f$ has a minimum value at
$\frac{(p+1)(n^2-1)}{pn}$ and
\begin{align*}&(p+2)-\frac{(p+1)(n^2-1)}{pn} \\&\leq
(n+1)-\frac{(p+1)(n^2-1)}{pn}=\frac{(n+1)(p-n+1)}{pn}\leq 0
\end{align*}
and hence  $p+2\leq \frac{(p+1)(n^2-1)}{pn}$.
 Since
\begin{align*}
f(p+2)&=pn(p+2)^2-2(n^2-1)(p+1)(p+2)+n(n^2-1)(p+2)\\
&=(p+2)\{pn(p+2)-2(n^2-1)(p+1)+n(n^2-1)\}\\
&=(p+2)(p-n+1)\{n(p-n+1)+2\},
\end{align*}
$f(p+2)\geq 0$ and  $f(p+2)=0$ if and only if $p=n-1$. This means
that
    $f(\lambda)\geq 0$ for all $\lambda\in (0, p+2]$,
    and $f(\lambda)=0$ for some $\lambda\in (0, p+2]$ if and only if $p=n-1$ and $\lambda=p+2$.

Hence if $\epsilon(Z)\leq (-K_X)\cdot Z=p+2$, $f(\lambda)\geq 0$
for all $\lambda\in (0, \epsilon(Z)]$, and $f(\lambda)=0$ for some
$\lambda\in (0, \epsilon(Z)]$ if and only if $\epsilon(Z)=p+2=n+1$
and $\lambda=n+1$. We are done.
\end{proof}

\begin{lemma}\label{BSCF} Let $X$ be a Fano manifold of dimension $\geq 2$.
Then for every  smooth rational curve $Z$ with $(-K_X)\cdot Z\geq
3$, we have $\epsilon(Z)\leq (-K_X)\cdot Z$.
\end{lemma}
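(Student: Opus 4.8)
The plan is to produce, for any smooth rational curve $Z$ with $(-K_X)\cdot Z\geq 3$, a curve $C\neq Z$ meeting $Z$ with $(-K_X)\cdot C\leq (-K_X)\cdot Z$, and then invoke Lemma~\ref{s.Bdd}. The natural way to find such a curve is via deformation theory of rational curves. Pick a point $z\in Z$ and consider the space of deformations of $Z$ fixing $z$ (or a suitable component of the Hilbert scheme / space of morphisms $\mathbb{P}^1\to X$ through $z$ containing $[Z]$). Standard bend-and-break estimates (see the references \cite{CMS}, \cite{K95} cited in the introduction) give that the dimension of the space of deformations of $Z$ through a fixed point is at least $(-K_X)\cdot Z - 2$; since $(-K_X)\cdot Z\geq 3$, this is positive, so $Z$ genuinely moves while fixing $z$.

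The key step is then bend-and-break: a nontrivial one-parameter family of rational curves all passing through the fixed point $z$ cannot have irreducible general member and irreducible special member both equal to $Z$ unless the whole family is constant — more precisely, if $Z$ moves in a positive-dimensional family through $z$, then either some member of this family degenerates into a reducible (or non-reduced) cycle, one component $C$ of which passes through $z$ and has $0<(-K_X)\cdot C<(-K_X)\cdot Z$ (so in particular $C\neq Z$), or else through $z$ there pass infinitely many distinct deformations of $Z$ itself, and picking any one of them other than $Z$ gives a curve $C$ with $C\neq Z$, $z\in C\cap Z$, and $(-K_X)\cdot C=(-K_X)\cdot Z$. In either case Lemma~\ref{s.Bdd} yields $\epsilon(Z)\leq (-K_X)\cdot C\leq (-K_X)\cdot Z$, which is exactly the claim.

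The main obstacle will be the case where the family does \emph{not} break: if through $z$ there is only the one curve $Z$ in the family — i.e. the deformations through $z$ all coincide set-theoretically with $Z$ — one must rule this out, since a positive-dimensional family of maps $\mathbb{P}^1\to X$ through $z$ whose images all equal $Z$ would force $Z$ itself to be $\mathbb{P}^1$-covered in a way contradicting it being a single fixed curve (one uses that such maps are parametrized, up to the finite group of automorphisms of $\mathbb{P}^1$ fixing a point, by the image, hence the family would be at most $1$-dimensional and accounted for by reparametrization, contradicting positivity of $(-K_X)\cdot Z-2\geq 1$ once we quotient appropriately — or, more robustly, one applies the rigidity/breaking dichotomy of Mori's bend-and-break lemma directly). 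Handling this degenerate possibility cleanly, and making sure the broken-off component $C$ actually meets $Z$ at a point and has the correct anticanonical degree bound, is the delicate part; everything else is a direct application of Lemma~\ref{s.Bdd} once such a $C$ is in hand.
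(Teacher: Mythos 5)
Your proposal is correct and takes essentially the same route as the paper: by Koll\'ar's estimate the deformations of $Z$ fixing a point form a space of dimension $\geq (-K_X)\cdot Z \geq 3$, which exceeds the $2$-dimensional group of automorphisms of $\mathbb{P}^1$ fixing a point, so $Z$ genuinely moves with a point fixed, and any deformed curve $C\neq Z$ meets $Z$ with $(-K_X)\cdot C=(-K_X)\cdot Z$, so Lemma~\ref{s.Bdd} gives the bound. The bend-and-break branch of your dichotomy is unnecessary (the paper does without it), since a same-degree deformation already suffices for the inequality $\epsilon(Z)\leq(-K_X)\cdot Z$.
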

\begin{proof}
Let $h:Z\hookrightarrow X$ be the embedding of $Z$ in $X$. By
II.2.2 in \cite{K95}, the deformation space of $h$ fixing one
point has dimension $\geq (-K_X)\cdot Z\geq 3$. Since  the group
of automorphisms of $\mathbb P^1$ fixing one point has dimension
2, $Z$ moves in $X$ with one point fixed. So one can find a curve
$C(\neq Z)$ in $X$ which is algebraically equivalent to $Z$ and
$C\cap Z\neq \emptyset$. Hence $\epsilon(Z)\leq (-K_X)\cdot
C=(-K_X)\cdot Z$ by Lemma \ref{s.Bdd}.
\end{proof}

\begin{proposition}\label{PF2} Let $Z \subset X$ be a smooth rational curve on a Fano
manifold of dimension $n$  with $(-K_X) \cdot Z = n+1.$ Suppose
that $(-K_X) \cdot C \geq n+1$ for any rational curve $C \subset
X$ with $Z \cap C \neq \emptyset.$ Then $X \cong \mathbb P^n$ and
$Z$ is a line on $\mathbb P^n$. \end{proposition}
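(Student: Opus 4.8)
The plan is to use the theory of rational curves of minimal degree, exploiting the hypothesis that $Z$ meets no rational curve of anticanonical degree $< n+1$. First I would consider a deformation of $Z$ through a fixed point $x \in Z$. By II.2.2 in \cite{K95}, since $(-K_X)\cdot Z = n+1$, the space of deformations of the embedding fixing $x$ has dimension at least $(-K_X)\cdot Z - 1 = n$, so after quotienting by the $2$-dimensional group of automorphisms of $\mathbb{P}^1$ fixing $x$, we get an $(n-1)$-dimensional family of rational curves through $x$, each (by the hypothesis, since any degenerate member would split off a lower-degree rational curve through $x$) irreducible of anticanonical degree exactly $n+1$. The key point is that these curves are \emph{standard rational curves} through $x$: their normal bundle must be $\mathcal{O}_{\mathbb{P}^1}(1)^{n-1}\oplus\mathcal{O}_{\mathbb{P}^1}$, because the tangent directions at $x$ of this family sweep out a subvariety of $\mathbb{P}(T_xX) = \mathbb{P}^{n-1}$ of dimension at least $n-1$, forcing it to be all of $\mathbb{P}^{n-1}$, and this maximality forces the splitting type.

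Next I would invoke Cho--Miyaoka--Shepherd-Barron (cf. \cite{CMS}), which is precisely cited in the paper's introduction: if through a general point of a Fano manifold $X^n$ there passes a family of rational curves of anticanonical degree $n+1$ whose tangent directions fill up $\mathbb{P}(T_xX)$, then $X \cong \mathbb{P}^n$. The hypothesis here gives us such a family through the specific point $x \in Z$; one must either check that such a family through one point already suffices (which is the content of the CMS-type rigidity statement, since the locus of "bad" points would be a proper closed subset and one argues the family spreads out), or first propagate the family to a general point. Either way, once $X \cong \mathbb{P}^n$, the curve $Z$ is a rational curve of degree $(-K_{\mathbb{P}^n})\cdot Z / (n+1) = 1$, hence a line.

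The main obstacle I anticipate is the step establishing that the curves through $x$ are standard (unbreakable with the right normal bundle) and that their tangent directions genuinely dominate $\mathbb{P}(T_xX)$ — in particular ruling out that the $(n-1)$-dimensional deformation family is "wasted" by all curves sharing tangent directions or by the presence of reducible degenerations. This is exactly where the hypothesis "$(-K_X)\cdot C \geq n+1$ for all rational $C$ meeting $Z$" does its work: it prevents any member of the family (which all pass through $x \in Z$, hence meet $Z$) from breaking into a union containing a lower-degree rational curve, so every deformation of $Z$ fixing $x$ stays irreducible of degree $n+1$, and a standard dimension count on the space of such curves through $x$ (each contributing an $(n-1)$-dimensional tangent-direction count versus the total $(n-1)$-dimensional family) pins down the normal bundle and the dominance. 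After that, the appeal to \cite{CMS} is essentially a citation.
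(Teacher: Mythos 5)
Your overall strategy (deform $Z$ through a fixed point $x\in Z$, use the hypothesis to prevent breaking, then invoke a Cho--Miyaoka--Shepherd-Barron characterization) is workable in spirit, but as written it has a genuine unresolved gap exactly at the step you flag yourself. The version of \cite{CMS} you quote is a statement at a \emph{general} point of $X$, whereas your family lives through the special point $x\in Z$; you offer two ways out (``one point already suffices'' or ``propagate the family to a general point'') but prove neither, and this is not a routine verification --- it is the heart of the matter. Moreover the auxiliary claims you lean on are shaky: the asserted normal bundle $\mathcal{O}_{\mathbb{P}^1}(1)^{n-1}\oplus\mathcal{O}_{\mathbb{P}^1}$ has the wrong rank (for a degree-$(n+1)$ curve in an $n$-fold the normal bundle has rank $n-1$ and degree $n-1$, so the standard type is $\mathcal{O}_{\mathbb{P}^1}(1)^{n-1}$), and the claim that the tangent directions at $x$ sweep out all of $\mathbb{P}(T_xX)$ requires finiteness of the tangent map for the family through $x$, which is a nontrivial result you neither cite nor prove. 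There is also a small arithmetic slip: II.2.2 of \cite{K95} gives deformations fixing a point of dimension at least $(-K_X)\cdot Z=n+1$ (not $n$), which is what actually yields the $(n-1)$-dimensional family after dividing by the $2$-dimensional automorphism group.

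The paper's proof avoids all of this. It uses the 11th condition of Corollary 0.4 in \cite{CMS}: it suffices that every rational curve through a \emph{general} point has $(-K_X)$-degree at least $n+1$. If this failed, rational curves of degree at most $n$ would pass through a dense set of points; since curves of bounded degree form a bounded family, the locus they sweep is closed, hence equals $X$, so some such curve meets $Z$, contradicting the hypothesis. Then $X\cong\mathbb{P}^n$ and $Z$ is a line because its $\mathcal{O}(1)$-degree is $1$. No deformation of $Z$, no tangent-map analysis, and no single-point-to-general-point propagation is needed. If you prefer your route, note that the paper's own Proposition 3.5 does use a single-point statement from \cite{CMS} (an $(n-1)$-dimensional \emph{complete} family of rational curves through one point forces $X\cong\mathbb{P}^n$); with that citation your no-breaking argument (which is correct: any degeneration of the family through $x$ would contain a rational component through $x$, hence meeting $Z$, of degree at most $n$) would suffice, and the normal-bundle and tangent-direction claims could be dropped entirely. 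As it stands, however, the proposal does not close the key step.
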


\begin{proof}
Using the 11th condition of Corollary 0.4 in  \cite{CMS}, it
suffices to show that for a general point $x_0 \in X$, every
rational curve $R$ through $x_0$ satisfies $(-K_X) \cdot R \geq
n+1$. Suppose not. Then rational curves of $(-K_X)$-degree $\leq
n$ cover the whole variety $X$. In particular, there exists a
rational curve $C$ with $(-K_X) \cdot C \leq n$ passing through a
point of $Z$, a contradiction. \end{proof}

\begin{proposition}\label{prop:3}
Let $X$ be a Fano manifold of dimension $n\geq 2$ with the
anticanonical polarization. Let $ Z \subset X$ be a smooth
rational curve. If $(-K_X) \cdot Z \geq 3$, then $X$ is slope
stable with respect to $Z$, except when $X \cong \mathbb P^n$ and
$Z$ is a line;
\end{proposition}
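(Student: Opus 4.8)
The plan is to combine Lemma~\ref{BSCF} with Proposition~\ref{s.Bdd.Fano}(iii) to reduce the problem to a single extremal situation, and then to identify that situation using Proposition~\ref{PF2}. First I would apply Lemma~\ref{BSCF}: since $(-K_X)\cdot Z\geq 3$ by hypothesis, we automatically get $\epsilon(Z)\leq (-K_X)\cdot Z$. This is precisely the hypothesis needed to invoke Proposition~\ref{s.Bdd.Fano}(iii), which tells us that $(X,-K_X)$ is slope semistable with respect to $Z$, and that $Z$ destabilizes $(X,-K_X)$ \emph{only} in the borderline case $\epsilon(Z)=(-K_X)\cdot Z=n+1$. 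So in all other cases slope stability holds, and it remains to analyze the case $\epsilon(Z)=(-K_X)\cdot Z=n+1$.

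Next I would show that in this extremal case $X\cong\mathbb P^n$ and $Z$ is a line, which is exactly the conclusion of Proposition~\ref{PF2}. To apply it, I need to verify its hypothesis: that every rational curve $C\subset X$ meeting $Z$ satisfies $(-K_X)\cdot C\geq n+1$. This is where the Seshadri constant enters. Suppose some rational curve $C$ with $C\cap Z\neq\emptyset$ had $(-K_X)\cdot C\leq n$. If $C\not\subset Z$, then Lemma~\ref{s.Bdd} would give $\epsilon(Z)\leq (-K_X)\cdot C\leq n<n+1$, contradicting $\epsilon(Z)=n+1$. If $C\subset Z$ then $C=Z$ (as $Z\cong\mathbb P^1$ is irreducible of dimension $1$), and $(-K_X)\cdot Z=n+1>n$, again a contradiction. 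Hence the hypothesis of Proposition~\ref{PF2} is satisfied, and we conclude $X\cong\mathbb P^n$ with $Z$ a line.

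Finally I would note that the excluded case is genuinely an exception and not merely an artifact: when $X\cong\mathbb P^n$ and $Z$ is a line, Example~\ref{e.s.linear} gives $\epsilon(Z,\mathbb P^n,-K_{\mathbb P^n})=n+1$, and $(-K_{\mathbb P^n})\cdot Z=n+1$, so we land exactly in the destabilizing case $\epsilon(Z)=(-K_X)\cdot Z=n+1$ identified in Proposition~\ref{s.Bdd.Fano}(iii). This shows the statement is sharp. Assembling these pieces: either $\epsilon(Z)<(-K_X)\cdot Z$ or $\epsilon(Z)=(-K_X)\cdot Z\neq n+1$, in which cases Proposition~\ref{s.Bdd.Fano}(iii) yields slope stability; or $\epsilon(Z)=(-K_X)\cdot Z=n+1$, in which case Proposition~\ref{PF2} (via the Seshadri bound of Lemma~\ref{s.Bdd}) forces $X\cong\mathbb P^n$ and $Z$ a line.

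The main obstacle is not any single step but rather making sure the logical bookkeeping between ``semistable but possibly destabilized'' and ``stable'' is airtight: Proposition~\ref{s.Bdd.Fano}(iii) is stated in terms of $Z$ destabilizing versus not, and one must carefully translate ``$Z$ does not destabilize'' into ``slope stable'' using the definitions, checking that the strict inequality $\mu_\lambda(\mathcal O_Z)>\mu(X)$ holds on the whole interval $(0,\epsilon(Z)]$ whenever $\epsilon(Z)=n+1$ fails or $(-K_X)\cdot Z\neq n+1$. The geometric input (deformation theory of rational curves via \cite{K95} and the characterization of $\mathbb P^n$ via \cite{CMS}) is already packaged in Lemma~\ref{BSCF} and Proposition~\ref{PF2}, so the proof of Proposition~\ref{prop:3} itself should be short once those are in hand.
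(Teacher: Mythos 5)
Your argument is correct and follows essentially the same route as the paper: Lemma~\ref{BSCF} feeds the hypothesis of Proposition~\ref{s.Bdd.Fano}(iii), which reduces everything to the borderline case $\epsilon(Z)=(-K_X)\cdot Z=n+1$, and there Lemma~\ref{s.Bdd} verifies the hypothesis of Proposition~\ref{PF2} to force $X\cong\mathbb P^n$ with $Z$ a line. Your extra checks (the case $C=Z$ in the hypothesis of Proposition~\ref{PF2}, and the sharpness via Example~\ref{e.s.linear}) are sound but not needed beyond what the paper records.
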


\begin{proof} Lemma \ref{BSCF} says that $\epsilon(Z)\leq (-K_Z)\cdot Z$ and
hence, by Proposition \ref{s.Bdd.Fano} (iii), $(X,-K_X)$ is slope
stable with respect to $Z$ except when $\epsilon(Z)=(-K_X)\cdot
Z=n+1$.

If $\epsilon(Z)=(-K_X)\cdot Z=n+1$,   $(-K_X) \cdot C \geq n+1$
for any rational curve $C (\neq Z)\subset X$ with $Z \cap C \neq
\emptyset$ by Lemma \ref{s.Bdd}. Proposition \ref{PF2} says that
this condition is equivalent to $X\cong \mathbb P^n$ and $Z$ is a
line in $\mathbb P^n$.
\end{proof}

\begin{remark}
Let $l$ be a line in $\mathbb P^n$ ($n\geq 2$). Then $(-K_{\mathbb
P^n})\cdot l=n+1$ and  $\epsilon(l,-K_{\mathbb P^n})=n+1$ by
Example \ref{e.s.linear}. Thus this is an example satisfying the
upper bound of Seshadri constant in Lemma \ref{BSCF}. In fact,
$(\mathbb P^n, -K_{\mathbb P^n})$ is slope semistable but not
slope stable with respect to $l$. Proposition \ref{prop:3} says
that $\mathbb P^n$ is the only Fano manifold which has a
destabilizing smooth rational curve with anticanonical degree
$\geq 3$.
\end{remark}

\begin{proposition}\label{P1} Let $X$ be a Fano manifold of dimension $n$.
Assume that there exists a smooth rational curve $Z \subset X$
with trivial normal bundle. Then there exists a rational curve $C$
with $C \neq Z$, $C \cap Z \neq \emptyset$ and $(-K_X )\cdot C
\leq n$. \end{proposition}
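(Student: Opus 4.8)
The plan is to use the deformation theory of rational curves on Fano manifolds, exactly as in Lemma~\ref{BSCF}, but pushed one step further to produce a genuinely new curve meeting $Z$. Let $h : \mathbb P^1 \hookrightarrow X$ be the embedding with image $Z$, and fix a point $z_0 \in Z$. Since $N_{Z/X}$ is trivial, $h^*T_X \cong \mathcal O_{\mathbb P^1}(2) \oplus \mathcal O_{\mathbb P^1}^{\,n-1}$, so $h$ is free; by II.2.2 and II.3 in \cite{K95} the space of deformations of $h$ fixing the point $z_0$ has dimension $-K_X\cdot Z + \dim\mathrm{Aut}(\mathbb P^1,z_0)\cdot(\text{something}) $ — more precisely, the component of $\mathrm{Hom}(\mathbb P^1,X)$ containing $h$ has dimension $-K_X\cdot Z + n = n+2$ near $h$ (here $-K_X\cdot Z = 2$ since $N_{Z/X}$ is trivial), and the subscheme fixing $z_0$ has dimension $\geq -K_X\cdot Z + n - n = 2$. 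Thus $Z$ moves in a positive-dimensional family through $z_0$, which gives the existence of a rational curve $C' \neq Z$ with $z_0 \in C'$; but this only yields $(-K_X)\cdot C' = 2 \le n$ when $C'$ is irreducible, and the actual content is that we must handle the case where the deformation breaks.

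First I would set up the universal family: let $W \to B$ be a flat family of rational curves on $X$, with $B$ irreducible of dimension $\geq 1$, every member passing through $z_0$, and $Z$ a member. If some member $C'$ of this family is irreducible and distinct from $Z$, then $C'$ is a rational curve with $C' \cap Z \ni z_0$ and $(-K_X)\cdot C' = (-K_X)\cdot Z = 2 \leq n$ (using $n \geq 2$), and we are done by taking $C = C'$. Otherwise, every member other than $Z$ is reducible; then take such a reducible member $C' = \sum C_i$. It still satisfies $z_0 \in C'$ and $\sum (-K_X)\cdot C_i = 2$, so each $C_i$ has $(-K_X)\cdot C_i = 1$ (there being at least two components, each of positive anticanonical degree on a Fano manifold). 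At least one component $C_j$ meets $Z$: either $z_0 \in C_j$, or $C_j$ meets another component and we chase connectivity back to $z_0$ — in any case, since $C'$ is connected and, for a one-parameter degeneration, specializes from $Z$, its support meets $Z$. That component $C_j$ is a rational curve with $C_j \neq Z$ (as $(-K_X)\cdot C_j = 1 < 2$), $C_j \cap Z \neq \emptyset$, and $(-K_X)\cdot C_j = 1 \leq n$. Taking $C = C_j$ finishes this case.

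The main obstacle I expect is the bookkeeping in the degenerate case: ensuring that when the deformation of $Z$ through $z_0$ specializes to a reducible cycle, one can always extract an \emph{irreducible} component that genuinely meets $Z$ and is distinct from $Z$. The key points to nail down are (a) the family through $z_0$ is at least one-dimensional — this is the dimension count from \cite{K95} and requires only $N_{Z/X}$ trivial (hence $h$ free) together with $(-K_X)\cdot Z = 2$; (b) a general member, if irreducible, is a new curve meeting $Z$ at $z_0$; (c) in the reducible case, the limit cycle is connected and its support meets $Z$, because it is an algebraic degeneration of (a multiple of) $Z$, so by properness the limit cycle meets $Z$, and no single component can be all of $Z$ and simultaneously carry all of the anticanonical degree $2$ while being part of a reducible connected cycle each of whose components has positive degree. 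Once (c) is in place, picking the component meeting $Z$ gives $(-K_X)\cdot C \leq 2 \leq n$, and since $n \geq 2$ this completes the proof.
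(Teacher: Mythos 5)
Your key step fails: from the triviality of $N_{Z/X}$ you cannot conclude that $Z$ moves in a positive-dimensional family of curves through a fixed point $z_0$. The bound from II.2.2 of \cite{K95} gives $\dim_{[h]}\operatorname{Hom}(\mathbb P^1,X;0\mapsto z_0)\geq -K_X\cdot Z=2$, but the automorphisms of $\mathbb P^1$ fixing a point already account for those $2$ dimensions, so the Hilbert scheme of curves through $z_0$ need not be positive-dimensional at $[Z]$; indeed the first-order deformations of $Z$ inside $X$ constrained to pass through $z_0$ are $H^0(N_{Z/X}\otimes\mathcal O_{\mathbb P^1}(-z_0))=H^0(\mathcal O_{\mathbb P^1}(-1)^{\oplus(n-1)})=0$, so $Z$ is rigid as a curve through $z_0$. (This is precisely why Lemma~\ref{BSCF} needs the hypothesis $(-K_X)\cdot Z\geq 3$.) A concrete counterexample to the family $W\to B$ you posit: take $X=\mathbb P^1\times\mathbb P^{n-1}$ and $Z=\mathbb P^1\times\{p\}$. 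The deformations of $Z$ are the other fibers, which are pairwise disjoint from $Z$, so no member of the family other than $Z$ itself meets $Z$, and no degeneration ``through $z_0$'' exists either. The proposition still holds there, but only via a curve in a completely different numerical class (for instance $\{q\}\times\ell$ with $\ell$ a line through $p$, of anticanonical degree exactly $n$), which also shows that the bound $n$ is sharp and cannot be attained by the curves your argument would produce, namely curves algebraically equivalent to $Z$ (degree $2$) or components of their degenerations (degree $1$).

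This is exactly the difficulty the paper's proof is built to handle: it takes the full $(n-1)$-dimensional family of deformations of $Z$ with no point fixed, whose universal family $\rho:U\to X$ dominates $X$, and splits into cases. If $\rho$ is not birational, two distinct members pass through a general point of $Z$ and one concludes much as you do. If $\rho$ is birational (the product-like situation above), the deformation family gives nothing new through points of $Z$, and one must invoke Theorem 2.1 of \cite{KMM} to produce a rational curve of degree $\leq n+1$ transversal to the family, specialize it to a curve meeting $Z$, and then use the characterization of $\mathbb P^n$ from \cite{CMS} together with a degeneration and numerical-class argument to lower the bound from $n+1$ to $n$. None of this can be bypassed by deforming $Z$ with a point fixed, so your proposal has an essential gap at its first step.
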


\begin{proof}
The  deformations of $Z$ form an $(n-1)$-dimensional family of
smooth rational curves with trivial normal bundles. It follows
that there exists an irreducible complete subscheme $Y$ in the
Hilbert scheme of curves on $X$ with a universal family $\pi: U
\to Y$ and a dominant  morphism $\rho: U \to X$ with the following
properties.
\begin{enumerate} \item[(i)] There exists a dense open subset $Y^o
\subset Y$ such that $\rho|_{U^o}: U^o \to X$ is unramified
where $U^o := \pi^{-1}(Y^o)$ and $\pi|_{U^o}$ is a $\mathbb
P^1$-bundle.
\item[(ii)] For each $y \in Y$, let $\ell_y \subset X$ be the
    curve $\rho(\pi^{-1}(y))$. Then $Z= \ell_{y_o}$ for some
    $y_o \in Y$ and, for each $y \in Y^o$,
    $\rho|_{\pi^{-1}(y)} : \mathbb  P^1 \to \ell_{y}$ is an
    embedding.
    \item[(iii)] For $y_1 \in Y^o$ and $y_2 \in Y\backslash
        \{y_1\}$, the curve $\ell_{y_1}$ is distinct from any
        irreducible component of $\ell_{y_2}$.
\end{enumerate}

Suppose that the morphism $\rho$ is not birational. Then, by (i)
and (ii), for a general point $x \in Z$, we have two $u_1 \neq u_2
\in \rho^{-1}(x)$ such that $\pi(u_1) = y_o \neq \pi(u_2)$. By the
property (iii), a component of $\ell_{\pi(u_2)}$ through $x$ gives
the desired curve $C$. Suppose that the morphism $\rho$ is
birational. Then, by (i), we may regard $U^o$ as an open subset in
$X$. Now apply Theorem 2.1 of \cite{KMM} to get a rational curve
$C_y$ intersecting $\ell_y$ for a general $y \in Y^o$ such that
$C_y \neq \ell_y$ and $(-K_X) \cdot C_y \leq n+1$. The proper
transform $\tilde{C}_y$ in $U$ is a rational curve with $\dim
\pi(\tilde{C}_y) = 1$. From the generality of $y$, we can find an
irreducible curve $\tilde{C}_{y_o}$ intersecting $\pi^{-1}(y_o)$
with $\dim \pi(\tilde{C}_{y_o}) =1$ such that the image $C=
\rho(\tilde{C}_{y_o})$ satisfies $C \neq Z$, $C \cap Z \neq
\emptyset$ and $(-K_X) \cdot C \leq n+1$. We are done if $(-K_X)
\cdot C < n+1$. Assume that $(-K_X) \cdot C = n+1$. We can deform
$C$ with a point $x_o= C \cap Z$ fixed to have an
$(n-1)$-dimensional family of distinct rational curves through
$x_o$. If this family of rational curves through $x_o$ form a
complete family, then $X \cong \mathbb  P^n$ by \cite{CMS} and
cannot contain a rational curve with trivial normal bundle unless
$n=1$. Thus this family of curves cannot be complete and there
exists a reducible (or non-reduced) curve $C'$ with $(-K_X) \cdot
C' = n+1$. If $C'$ has more than one component, then one of the
component, say $C^{"}$, must be distinct from $Z$, intersects $Z$
and satisfies $(-K_X) \cdot C^{"} <n+1$. Thus we are done. If on
the other hand, $C'$ is irreducible and non-reduced, its reduction
$C_0$ is a rational curve with $(-K_X) \cdot C_0 <n+1$ passing
through $x_o$. If $C_0 \neq Z$, we are done. It remains to exclude
the possibility of $C_0 = Z$. If this happens, it means that the
curve $C$ is numerically equivalent to a multiple of $Z$.   Take
an irreducible hypersurface $D \subset Y$ such that
\begin{enumerate} \item[(a)] $y_o \not\in D$, \item[(b)] $D \cap
\pi(U^o \cap \tilde{C}) \neq \emptyset$ and \item[(c)]
$\pi(U^o \cap \tilde{C}) \not\subset D$. \end{enumerate} Then
$C$ has non-empty intersection with the hypersurface
$\rho\circ\pi^{-1}(D)$, but $Z$ is disjoint from
$\rho\circ\pi^{-1}(D)$ from the birationality of $\rho$. This
is a contradiction to the assumption that $C$ is in the same
numerical class as some multiple of $Z$.\end{proof}

\begin{proof}[Proof of Theorem~{\rm \ref{theorem:B}}]Suppose that the polarized Fano
manifold $(X,-K_X)$ is not slope stable with respect to a smooth
curve $Z$. By Corollary~\ref{cor:normal bundle} the curve $Z$ is a
rational curve whose normal bundle is either
\begin{itemize}
\item[$\bullet$]
    $N_{Z/X}=\mathcal{O}_{\mathbb{P}^1}^{n-1}(-a)$ or
\item[$\bullet$]
    $N_{Z/X}=\mathcal{O}_{\mathbb{P}^1}^{n-2}(-a)\oplus
    \mathcal{O}_{\mathbb{P}^1}(-a-1),$
\end{itemize}
where $a$ is an integer.

Now we suppose that if the Fano manifold $X$ is $\mathbb{P}^n$,
then $Z$ is not a line. By Proposition~\ref{prop:3} we see that
$-K_X\cdot Z$ is either $2$ or $1$. Therefore we obtain
\[\deg (N_{Z/X})=-2-K_X\cdot Z= 0 \ \ \ \mbox{ if } -K_X\cdot Z=2; \]
\[\deg (N_{Z/X})=-2-K_X\cdot Z= -1 \ \ \ \mbox{ if } -K_X\cdot Z=1. \]
If the normal bundle of $Z$ is
$\mathcal{O}_{\mathbb{P}^1}^{n-1}(-a)$, then $\deg
(N_{Z/X})=-(n-1)a$. Therefore, $-K_X\cdot Z=2$ and $a=0$. If the
normal bundle of $Z$ is
$\mathcal{O}_{\mathbb{P}^1}^{n-2}(-a)\oplus
\mathcal{O}_{\mathbb{P}^1}(-a-1)$, then $\deg
(N_{Z/X})=-(n-1)a-1$. Then $a=0$. Consequently, the curve $Z$ is a
rational curve with trivial normal bundle or with
$N_{Z/X}=\mathcal{O}_{\mathbb{P}^1}^{n-2}\oplus
    \mathcal{O}_{\mathbb{P}^1}(-1)$.

If $Z$ is a rational curve with trivial normal bundle then
Proposition~\ref{P1} shows that we have a rational curve $C$ with
$C\ne Z$, $Z\cap C\ne\emptyset$ and $-K_X\cdot C\leq n$. It then
follows from Lemma~\ref{s.Bdd} that $\epsilon(X,-K_X, Z)\leq
-K_X\cdot C\leq n$. Then Proposition~\ref{s.Bdd.Fano} completes
the proof.
\end{proof}

\begin{remark}\label{PD}
Let $(X_i,L_i)$, $i=1,2$, be polarized manifolds. By Remark 3.9 in
\cite{R06}
$$\mu(X_1\times X_2, L_1\boxtimes
L_2)=\mu(X_1,L_1)+\mu(X_2,L_2)$$ and,  for  a subscheme $Z$ of
$X_2$, we have
$$\mu_{\lambda}(\mathcal O_{X_1\times Z},L_1\boxtimes L_2)=\mu(X_1,L_1)+\mu_{\lambda}(\mathcal
O_Z,L_2)$$ and
$$\epsilon(X_1\times Z,L_1\boxtimes L_2)=\epsilon(Z,L_2).$$

So  $Z$ destabilizes (resp. strictly destabilizes) $(X_2,L_2)$ if
and only if $X_1\times Z$ destabilizes (resp. strictly
destabilizes) $(X_1\times X_2, L_1\boxtimes L_2)$. Since any
polarized manifold is slope semistable with  respect to any smooth
point by Theorem 4.29 in \cite{RT07}, $(X_1\times X_2,
L_1\boxtimes L_2)$ is slope semistable with respect to any fiber
$X_1\times \{p\}$, $p\in X_2$.
\end{remark}

\begin{example}\label{PDPP}
Let $X:=\mathbb P^1\times \mathbb P^{n-1}$ and $p\in \mathbb
P^{n-1}$. Since $\epsilon(\mathbb P^1\times
\{p\},X,-K_X)=\epsilon(p,,\mathbb P^{n-1},-K_{\mathbb
P^{n-1}})=n$, $\mathbb P^1\times \{p\}$ destabilizes $(X,-K_X)$ by
Proposition \ref{s.Bdd.Fano} (i) although $(X,-K_X)$ is slope
semistable with respect to $ \mathbb P^1\times \{p\}$ by Remark
\ref{PD}. This shows that we cannot improve Theorem
\ref{theorem:B} to slope stability.
\end{example}

\begin{remark}
We remark that Theorem 4.29 in \cite{RT07} says that
$\epsilon(p,X,-K_X)\leq n+1$ for any smooth point $p\in X$. We
also know that $\epsilon(p,\mathbb P^{n}, -K_{\mathbb P^n})=n+1$
for any smooth point $p$ in $\mathbb P^n$. The following Lemma
\ref{SPF} shows that $\mathbb P^n$ is the unique Fano manifold
having a point satisfying the equality.
\end{remark}

\begin{lemma}\label{SPF} Let $X$ be a Fano manifold of dimension $n\geq 3$
with $X\not\cong\mathbb P^n$. Then  $\epsilon(p, X, -K_X)\leq n$
for all point $p\in X$.\end{lemma}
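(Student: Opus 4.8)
The plan is to argue by contradiction: suppose $X \not\cong \mathbb{P}^n$ but there is a point $p \in X$ with $\epsilon(p, X, -K_X) > n$, that is, $\epsilon(p) = n+1$ (using the bound $\epsilon(p) \le n+1$ from Theorem~4.29 in \cite{RT07}). The key is that when $\epsilon(p) = n+1$, the blow-up $\pi : Y \to X$ at the point $p$, with exceptional divisor $E \cong \mathbb{P}^{n-1}$, has $-K_Y = \pi^*(-K_X) - (n-1)E$ ample, so by Lemma~\ref{lemma:Fano} (applied with $Z = \{p\}$, $r = n$; note $\epsilon(p) = n+1 > n$) the variety $Y$ is Fano. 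I would then try to identify $Y$ explicitly and reach a contradiction.

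First I would use Lemma~\ref{s.Bdd} to control rational curves through $p$: if $C \ne \{p\}$ is any curve through $p$ then trivially $\epsilon(p) \le (-K_X) \cdot C$, so every curve through $p$, in particular every rational curve through $p$, has anticanonical degree $\ge n+1$. Since a Fano manifold is covered by rational curves and through a general point one can take a minimal-degree member, combined with bend-and-break estimates (II.2.2 and Theorem~2.1-type arguments in \cite{K95}, as already used in Lemma~\ref{BSCF} and Proposition~\ref{PF2}), one forces every rational curve through a general point of $X$ to have degree $\ge n+1$. At this stage the natural move is to invoke the characterization of projective space via the anticanonical degree of curves — exactly the "11th condition of Corollary~0.4 in \cite{CMS}" used in the proof of Proposition~\ref{PF2} — to conclude $X \cong \mathbb{P}^n$, contradicting the hypothesis.

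The main obstacle is the step passing from "rational curves through the special point $p$ have degree $\ge n+1$" to "rational curves through a general point have degree $\ge n+1$." A priori $p$ could be a very special point, and low-degree rational curves could exist elsewhere while avoiding $p$. The resolution should mirror the proof of Proposition~\ref{PF2}: if rational curves of $(-K_X)$-degree $\le n$ existed, they would cover a divisor (or all of $X$), and one would deform such a curve, or a minimal rational curve through a general point, to make it pass through $p$ while keeping its degree $\le n$, contradicting $\epsilon(p) \ge n+1$. Making this "sweeping $p$ into the locus of low-degree curves" rigorous — ensuring the deformed curve actually meets $p$ and stays irreducible of controlled degree, or else breaks into a component through $p$ of still-smaller degree — is where the deformation theory of \cite{K95} has to be deployed carefully; everything else is a direct application of the already-established Lemma~\ref{lemma:Fano} and the \cite{CMS} criterion.
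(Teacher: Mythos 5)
Your proposal correctly identifies the two essential ingredients (Lemma~\ref{s.Bdd} and the 11th condition of Corollary~0.4 in \cite{CMS}), but the step you yourself flag as ``the main obstacle'' is exactly where the argument does not close, and the resolution you sketch would not work. You want to go from ``every curve through the special point $p$ has degree $\geq n+1$'' to ``every rational curve through a \emph{general} point has degree $\geq n+1$'' by supposing a low-degree rational curve exists somewhere, claiming it sweeps out a divisor or all of $X$, and then deforming it to pass through $p$. Neither claim is justified: a rational curve of degree $\leq n$ may be rigid or move only inside a proper closed subset of arbitrary codimension, and one cannot in general deform a given rational curve so as to pass through a prescribed point unless its deformations already cover $X$ --- which is precisely what you would need to prove. (Two smaller slips: $\epsilon(p)>n$ does not force $\epsilon(p)=n+1$, since Seshadri constants need not be integers; and Lemma~\ref{lemma:Fano} applied to $Z=\{p\}$ only says $E\cong\mathbb P^{n-1}$ and the point are Fano, not that the blow-up $Y$ is Fano, so that branch is a dead end --- though you never use it.)

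The paper's proof avoids the contradiction setup entirely and runs the implication in the other direction, which is what makes it one line: since $X\not\cong\mathbb P^n$, the contrapositive of the \cite{CMS} criterion says it is \emph{not} true that all rational curves through a general point have $(-K_X)$-degree $\geq n+1$; hence rational curves of degree $\leq n$ pass through a general point, and since such curves form a bounded family their swept locus is closed, so they cover all of $X$ (this is the same covering statement used in the proof of Proposition~\ref{PF2}). In particular some rational curve $C'$ with $(-K_X)\cdot C'\leq n$ passes through the given point $p$, and Lemma~\ref{s.Bdd} gives $\epsilon(p,X,-K_X)\leq n$. So the general-to-special passage is handled by the closedness of the locus covered by bounded-degree rational curves, not by deforming a particular curve through $p$; with that replacement your argument becomes the paper's.
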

\begin{proof}If $X\neq \mathbb P^n$,  rational curves of $(-K_X)$-degree
$\leq n$ cover the whole variety $X$ by the 11th condition of
Corollary 0.4 in  \cite{CMS}. Hence, for any point $p\in X$, there
exists a rational curve $C'$ with $(-K_X) \cdot C' \leq n$ passing
through it and so $\epsilon(p, X, -K_X)\leq n$.\end{proof}

\begin{proposition}Let $X=\mathbb P^1\times  Y$ where $Y$ is a Fano manifold of
dimension $n-1$. Then there exists a point $p\in Y$ such that the
fiber $\mathbb P^1\times \{p\}$  destabilizes $(X, -K_X)$ if and
only if $Y\cong \mathbb P^{n-1}$.\end{proposition}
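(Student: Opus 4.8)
The plan is to combine the Seshadri-constant computations already set up with the earlier rigidity results about $\mathbb P^n$. First I would recall that by Remark~\ref{PD} the fiber $\mathbb P^1\times\{p\}$ destabilizes $(X,-K_X)=(\mathbb P^1\times Y,-K_{\mathbb P^1}\boxtimes(-K_Y))$ if and only if the point $p$ destabilizes $(Y,-K_Y)$; and since $(-K_Y)\cdot(\text{point})$ makes no sense, the relevant invariant is instead the Seshadri constant $\epsilon(p,Y,-K_Y)$ together with the slope formula for a point. The key observation is that a smooth point is always slope semistable (Theorem~4.29 in \cite{RT07}), and the proof of Proposition~\ref{s.Bdd.Fano}(i) shows exactly the right threshold: $\mathbb P^1\times\{p\}$ destabilizes $(X,-K_X)$ precisely when $\epsilon(\mathbb P^1\times\{p\},X,-K_X)$ is large enough. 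Using $\epsilon(\mathbb P^1\times\{p\},X,-K_X)=\epsilon(p,Y,-K_Y)$ from Remark~\ref{PD}, the destabilizing condition becomes a numerical inequality on $\epsilon(p,Y,-K_Y)$.

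Second I would identify the precise numerical threshold. For a single point $p$ in an $(n-1)$-dimensional manifold $Y$ the quotient slope $\mu_\lambda(\mathcal O_p)$ is a rational function of $\lambda$ that can be computed from the formulas for $a_0(x),a_1(x)$ in Section~2 (this is essentially the point version underlying Lemma~\ref{r.fano} with $r=n-1$, $Z$ a point in the $n$-dimensional $X=\mathbb P^1\times Y$). The outcome, parallel to Example~\ref{PDPP}, is that the fiber over $p$ destabilizes $(X,-K_X)$ if and only if $\epsilon(p,Y,-K_Y)\geq n$. So the whole proposition reduces to the statement: there exists $p\in Y$ with $\epsilon(p,Y,-K_Y)\geq n$ if and only if $Y\cong\mathbb P^{n-1}$.

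Third, the "if" direction is immediate: for $Y=\mathbb P^{n-1}$ and any point $p$, Example~\ref{e.s.linear} (with $Z=\{p\}$ a zero-dimensional linear subspace) gives $\epsilon(p,\mathbb P^{n-1},-K_{\mathbb P^{n-1}})=n$, so by Proposition~\ref{s.Bdd.Fano}(i)-type reasoning (as already recorded in Example~\ref{PDPP}) the fiber destabilizes. The "only if" direction is exactly Lemma~\ref{SPF}: if $Y\not\cong\mathbb P^{n-1}$ then $Y$ has dimension $n-1\geq\,?$ — here I need $n-1\geq 3$, i.e. $n\geq 4$; if $n=3$ so that $\dim Y=2$, I handle surfaces separately, noting that a del Pezzo surface $Y$ with $Y\not\cong\mathbb P^2$ is covered by conics or lines of anticanonical degree $\leq 2$, so $\epsilon(p,Y,-K_Y)\leq 2<n=3$ for every $p$ by Lemma~\ref{s.Bdd}. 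In all cases $\epsilon(p,Y,-K_Y)\leq n-1<n$, so no fiber destabilizes.

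I expect the main obstacle to be the low-dimensional bookkeeping: making sure the threshold "$\epsilon(p,Y,-K_Y)\geq n$" is correctly derived (the point-version slope computation, not stated verbatim earlier, must be checked against Example~\ref{PDPP}), and handling the surface case $\dim Y=2$ where Lemma~\ref{SPF} as stated requires $\dim\geq 3$ — there one falls back on the explicit geometry of del Pezzo surfaces together with Lemma~\ref{s.Bdd}. Everything else is a direct splice of Remark~\ref{PD}, Example~\ref{e.s.linear}, Lemma~\ref{SPF}, and Proposition~\ref{s.Bdd.Fano}.
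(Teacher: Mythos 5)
Your argument is correct and takes essentially the same route as the paper: Remark~\ref{PD} transfers the Seshadri constant, Proposition~\ref{s.Bdd.Fano}(i) applied to the degree-$2$ fiber with trivial normal bundle gives the threshold $\epsilon(p,Y,-K_Y)\geq n$, Example~\ref{e.s.linear} (via Example~\ref{PDPP}) settles $Y\cong\mathbb{P}^{n-1}$, and Lemma~\ref{SPF} settles the converse. Your separate treatment of $\dim Y=2$ (del Pezzo surfaces other than $\mathbb{P}^2$ being covered by rational curves of anticanonical degree $\leq 2$, so $\epsilon(p,Y,-K_Y)\leq 2<n$ by Lemma~\ref{s.Bdd}) is a legitimate patch for the case where Lemma~\ref{SPF}, stated only in dimension $\geq 3$, does not literally apply --- a point the paper's own proof passes over silently.
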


\begin{proof}If $Y\not\cong \mathbb P^{n-1}$, $\epsilon(p,Y,-K_Y)\leq n-1$
for any point $p$ in $Y$ by Lemma \ref{SPF}. So $\epsilon(\mathbb
P^1\times \{p\},X,-K_X)\leq n-1$ by  Remark \ref{PD}. Since the
normal bundle of $\mathbb P^1\times \{p\}$ is trivial, by
Proposition \ref{s.Bdd.Fano} (i), $(X,-K_X)$ is slope stable with
respect to $\mathbb P^1\times \{p\}$ for any point $p\in Y$.

If $Y\cong \mathbb P^{n-1}$, then  any fiber $\mathbb P^1\times
\{p\}$ destabilizes $(X,-K_X)$ by Example \ref{PDPP}.
\end{proof}

\begin{lemma}\label{EL1}
Let $X$ be a smooth Fano threefold and let $C$ be a smooth curve
on $X$. Let $\sigma :\tilde{X}\rightarrow X$ be the blowup of $X$
along the curve $C$. Denote the exceptional divisor of $\sigma$ by
$E$. Then $(-K_{\tilde X})^2\cdot E=(-K_X)\cdot C+2-2g(C)$.
\end{lemma}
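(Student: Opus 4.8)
The plan is to compute the intersection number $(-K_{\tilde X})^2 \cdot E$ directly using the standard description of the blow-up of a smooth threefold along a smooth curve. First I would recall that for $\sigma: \tilde X \to X$ the blow-up of $X$ along the smooth curve $C$ with exceptional divisor $E$, the canonical divisor transforms as $K_{\tilde X} = \sigma^* K_X + E$ (since the codimension of $C$ is $2$, this is the $r-1 = 1$ coefficient appearing already in the excerpt). Hence $-K_{\tilde X} = \sigma^*(-K_X) - E$, and expanding,
\[
(-K_{\tilde X})^2 \cdot E = \bigl(\sigma^*(-K_X) - E\bigr)^2 \cdot E = \sigma^*(-K_X)^2 \cdot E - 2\,\sigma^*(-K_X)\cdot E^2 + E^3.
\]

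Next I would evaluate the three terms. The term $\sigma^*(-K_X)^2 \cdot E$ vanishes, since $\sigma^*(-K_X)^2$ is pulled back from $X$ and restricts to a pullback class on $E \cong \mathbb P(N_{C/X})$ that involves $(-K_X)|_C^2 = 0$ (as $C$ is a curve). For the remaining two terms I would use the same computation as in Lemma~\ref{s.Bdd.Cur}: writing $\sigma|_E: E \cong \mathbb P(N_{C/X}) \to C$ and $\omega = c_1(\mathcal O_{\mathbb P(N_{C/X})}(1)) = -E|_E$, one has $\sigma^*(-K_X)|_E \cdot (-E|_E) = (-K_X)\cdot C$ and $(-E|_E)^2 = (-E)^2 \cdot E = -p$ where $p = \deg N_{C/X}$. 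This gives $\sigma^*(-K_X)\cdot E^2 = \sigma^*(-K_X)|_E \cdot (-E|_E)\cdot(-1) $... more carefully, $-2\sigma^*(-K_X)\cdot E^2 = -2\sigma^*(-K_X)|_E\cdot(E|_E) = 2\,\sigma^*(-K_X)|_E\cdot(-E|_E) = 2\,(-K_X)\cdot C$, and $E^3 = (-E)^3\cdot(-1)$... here $E^3 = (E|_E)^2 = (-\omega)^2 = \omega^2 = (-E|_E)^2 = -p$ by the Grothendieck relation, so $E^3 = -p$. Collecting, $(-K_{\tilde X})^2\cdot E = 0 + 2(-K_X)\cdot C - p$.

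Finally I would translate $p$ into genus data via adjunction on the surface... no, more directly: by adjunction for the curve $C$ in the threefold $X$, $\deg N_{C/X} = \deg K_C - \deg K_X|_C = (2g(C) - 2) + (-K_X)\cdot C$, i.e. $p = (-K_X)\cdot C + 2g(C) - 2$. Substituting, $(-K_{\tilde X})^2 \cdot E = 2(-K_X)\cdot C - \bigl((-K_X)\cdot C + 2g(C) - 2\bigr) = (-K_X)\cdot C + 2 - 2g(C)$, which is the claimed formula. The only point requiring any care is bookkeeping the signs in the self-intersection terms and correctly invoking the Grothendieck relation on $\mathbb P(N_{C/X})$; since this is precisely the computation already carried out in the proof of Lemma~\ref{s.Bdd.Cur} (specialized to $n=3$), there is no real obstacle, and I expect the proof to be short and purely computational.
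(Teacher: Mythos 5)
Your computation is correct and is essentially the paper's own proof: expand $(-K_{\tilde X})^2\cdot E$ using $K_{\tilde X}=\sigma^*K_X+E$, evaluate the three intersection terms exactly as in the proof of Lemma~\ref{s.Bdd.Cur} (giving $0+2(-K_X)\cdot C-\deg N_{C/X}$), and finish with adjunction $\deg N_{C/X}=2g(C)-2-K_X\cdot C$. Despite the somewhat tangled sign bookkeeping in the middle, every intermediate value ($E^3=-\deg N_{C/X}$, $\sigma^*(-K_X)\cdot E^2=-(-K_X)\cdot C$) is right, so there is no gap.
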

\begin{proof}
From Lemma 2.2.14. in \cite{SH5} or the proof of Lemma
\ref{s.Bdd.Cur}, we have the following equalities.
\begin{align*}
(-K_{\tilde X})^2E&=(\sigma^*K_X+E)^2E\\
&=(\sigma^*K_X)^2E+2\sigma^*K_X\cdot E^2+E^3\\
&=0-2K_X\cdot C-\deg(N_{C/X})\\
&=-K_X\cdot C-2g(C)+2.
\end{align*}
\end{proof}

\begin{remark}
Let $\pi: X\rightarrow \mathbb P^3$ be the blowup along a line
$l$, and let $E$ be the exceptional divisor of $\pi$. Then
$E=\mathbb F_0$. Let $Z$ be a fiber of the map $\pi|_E: E\to l$.
We note that $N_{Z/X}=\mathcal O_{\mathbb P^1}\oplus \mathcal
O_{\mathbb P^1}(-1)$. Now we want to show that
$\epsilon(Z,X,-K_X)=3$.

Let $C$ be the section of $\pi|_E: E\to l$ with $C^2=0$. Then
 $-K_X|_E=C+\alpha Z$ for some $\alpha\in
\mathbb Z$. Since $(-K_X|_E)^2=(-K_{\mathbb P^3})\cdot l+2=6$ by
Lemma \ref{EL1} and $(-K_X|_E)^2=(C+\alpha Z)^2=2\alpha=6$, we
have $\alpha=3$ and hence $-K_X|_E=C+3Z$. Therefore
$\epsilon(Z,X,-K_X)\leq (-K_X)\cdot C=(-K_X|_E)\cdot C=(C+3Z)\cdot
C=3$ by Lemma \ref{s.Bdd}.

To show the equality, suppose not, i.e. $\epsilon(Z,X,-K_X)<3$.
From Example \ref{e.s.linear}, we have $\epsilon(l,\mathbb
P^3,-K_{\mathbb P^3})=4$ and so
\begin{align*}
\epsilon(E,X,-K_X)&=\epsilon(E,X,-\pi^*(K_{\mathbb
P^3})-E)\\&={\rm max}\{x| \ (-\pi^*(K_{\mathbb P^3})-E)-xE  \text{ is nef}\}\\
&={\rm max}\{x| \ -\pi^*(K_{\mathbb P^3})-(x+1)E  \text{ is nef}\}\\
&={\rm max}\{x| \ -\pi^*(K_{\mathbb P^3})-xE  \text{ is nef}\}-1\\
&= \epsilon(l,\mathbb P^3,-K_{\mathbb P^3})-1=3.
\end{align*}
Therefore the assumption $\epsilon(Z,X,-K_X)<3$ implies  that
$$\epsilon(Z,X,-K_X)<\epsilon(E,X,-K_X),$$  and hence
$\epsilon(Z,X,-K_X)=\epsilon(Z,E,-K_X|_E)$ by the Remark
\ref{r.s.Bdd} (ii). But we have
$$\epsilon(Z,E,-K_X|_E)={\rm max}\{x\ |\ (-K_X|_E)-xZ=C+(3-x)Z \
\text{is nef}\}=3,$$ a contradiction.

In conclusion, we showed that $\epsilon(Z,X,-K_X)=3$. Therefore,
$Z$ strictly destabilizes $(X, -K_X)$ by Proposition
\ref{s.Bdd.Fano} (ii), and we cannot improve
Theorem~\ref{theorem:B}. We however note that the Picard number of
$X$ is $2$.
\end{remark}

\begin{proof}[Proof of Theorem~{\rm \ref{theorem:D}}] Let  $Z$ be a
smooth rational curve on $X$.  By Theorem~\ref{theorem:B}, we only
need to consider a smooth rational curve with normal bundle either
trivial or $\mathcal O_{\mathbb
    P^1}\oplus\mathcal O_{\mathbb P^1}(-1)$ when $n=3$.

If the normal bundle $N_{Z/X}$ is trivial,  there is a curve
$C(\neq Z)$
    which is algebraically equivalent to $Z$ and $C\cap Z\neq
    \emptyset$ by Proposition 2 in  \cite{H03}.
    So $\epsilon(Z)\leq (-K_X)\cdot C=(-K_X)\cdot Z=2$ by Lemma \ref{s.Bdd}.
    Therefore the polarized manifold  $(X,-K_X)$ is slope stable with respect to $Z$
    by Proposition \ref{s.Bdd.Fano} (i).

Therefore, we have only to consider a smooth rational curve $Z$ on
a Fano threefold $X$ with the normal bundle $N_{Z/X}\cong\mathcal
O_{\mathbb P^1}\oplus\mathcal O_{\mathbb P^1}(-1)$.

Since $(-K_X)\cdot Z=\deg(N_{Z/X})+2=1$, the Fano manifold $X$ is
of Fano index $1$. Let $g$ be the genus of $X$, i.e.,
$g=\frac{1}{2}(-K_X)^3+1$.
\begin{enumerate}
\item[\textit{Case}] $g\geq 4$: By Remark 4.3.6 in \cite{SH5},
    there exists a curve $C(\neq Z)$ such that $(-K_X)\cdot
    C=1$ and $C\cap Z\neq \emptyset$. So $\epsilon(Z)\leq
    (-K_X)\cdot C=1$ by Lemma \ref{s.Bdd}. Hence the result
    follows from Proposition \ref{s.Bdd.Fano} (ii).
\item[\textit{Case}] $g=3$: Propositions 4.1.11 and 4.1.12 in
    \cite{SH5} show that the Fano manifold $X$ belongs to one
    of the following  two cases.

\begin{itemize}
\item[(1)] $X$ is anticanonically embedded in $\mathbb
    P^4$ as a quartic  and $Z$ is a line.

  Choose a plane $H $ so that $H\cap X=Z\cup Z'$ and $Z$
is not a component of $Z'$. Set $C:=Z'$. Then $(-K_X)\cdot
C=3$.

Let $\sigma :\tilde{X}\rightarrow X$ be the blowing up of
$X$ with center $Z$, and  $E$ be the exceptional divisor
of $\sigma$. Let $\bar{C}$ be the proper transform of $C$.
Since the intersection number of $C$ and $Z$ in $H$ is 3,
$E\cdot \bar{C}=3$. Hence $(\sigma^*(-K_X)-xE)\cdot
\bar{C}=(-K_X)\cdot C-xE\cdot\bar{C}=3-3x$. So if
$\sigma^*(-K_X)-xE$ is nef, then $x\leq 1$. Therefore
$\epsilon(Z)\leq 1$ and   hence   $(X,-K_X)$ is slope
stable with respect to $Z$ by Proposition \ref{s.Bdd.Fano}
(ii).

\item[(2)] The anticanonical linear system $|-K_X|$
    defines a finite morphism
    $\varphi=\varphi_{|-K_X|}:X\rightarrow Q\subset
    \mathbb P^4$ of degree 2 onto a nonsingular quadric
    $Q$ ramified along a surface $S$ of degree 8.

 Set  $Z':=\varphi(Z)$. Then $Z'$ is a line in $\mathbb
 P^4$. Choose a line  $l(\neq Z')$  in $Q$ with $l\cap
 Z'\neq \emptyset$. Let $C$ be a component of
$\varphi^{-1}(l)$ intersecting $Z$. Then $(-K_X)\cdot
C\leq 2\deg l= 2$ and $C\cap Z\neq \emptyset$. So
$\epsilon(Z)\leq (-K_X)\cdot C\leq 2$ by Lemma
\ref{s.Bdd}. Therefore $(X,-K_X)$ is slope stable with
    respect to $Z$ by Proposition \ref{s.Bdd.Fano}
    (ii).\end{itemize}
\item[{\textit Case}] $g=2$: The anticanonical linear system
    $|-K_X|$ defines a finite morphism
    $\varphi=\varphi_{|-K_X|}:X\rightarrow  \mathbb P^3$ of
    degree 2 ramified along a surface of degree 6 (Proposition
    4.1.11 in \cite{SH5}). Let $Z':=\varphi(Z)$. Choose a line
    $l$ on $\mathbb P^3$ such that $l\neq Z'$ and $l\cap
    Z'\neq \emptyset$. Take an irreducible component $C$ of
    $\varphi^{-1}(l)$ intersecting $Z$.  Then $-K_X\cdot C\leq
    2$ and $Z\cap C\neq \emptyset$. So $\epsilon(Z)\leq
    (-K_X)\cdot C\leq2$ by Lemma \ref{s.Bdd}. Therefore
    $(X,-K_X)$ is slope stable with respect to $Z$ by
    Proposition \ref{s.Bdd.Fano} (ii).
\end{enumerate}
This completes the proof. \end{proof} As seen in the proof of
Theorem~\ref{theorem:D}, on a given Fano manifold, its Fano index
gives us a lower bound for the intersection numbers of its
anticanonical divisor with curves. Note that the Fano index of an
$n$-dimensional Fano manifold is at most $n+1$. Furthermore, the
theorem of Kobayashi-Ochiai (see \cite{KoOch73}) states that it is
$n+1$ (resp. $n$) if and only if the Fano manifold is
$\mathbb{P}^n$ (resp. a quadric hypersurface in
$\mathbb{P}^{n+1}$). Based on this simple observation, we are able
to obtain the following:
\begin{corollary}
Let $X$ be a Fano manifold of dimension $n\geq 3$. If the Fano
index of $X$ is at least $3$ and at most $n$, then the polarized
manifold $(X, -K_X)$ is slope stable with respect to any smooth
curve.
\end{corollary}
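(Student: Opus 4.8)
The plan is to reduce the statement to Proposition~\ref{s.Bdd.Fano} by showing that the Fano index hypothesis forces a favourable lower bound on anticanonical degrees of curves, which in turn bounds the Seshadri constant. First I would let $Z\subset X$ be an arbitrary smooth curve and observe that by Corollary~\ref{cor:high genus} we may assume $Z$ is a smooth rational curve, since $(X,-K_X)$ is automatically slope stable with respect to non-rational smooth curves. Next, writing $r$ for the Fano index of $X$, every curve $C$ on $X$ satisfies $(-K_X)\cdot C\geq r$ because $-K_X$ is $r$ times a primitive ample class; in particular $(-K_X)\cdot Z\geq r\geq 3$ by hypothesis.

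With $(-K_X)\cdot Z\geq 3$ in hand, I would invoke Proposition~\ref{prop:3}: it says that $(X,-K_X)$ is slope stable with respect to $Z$ unless $X\cong\mathbb{P}^n$ and $Z$ is a line. But $X\cong\mathbb{P}^n$ has Fano index $n+1$, which is excluded by the hypothesis that the index is at most $n$; alternatively, one notes that a line in $\mathbb{P}^n$ has anticanonical degree $n+1$, while here every curve has degree $\geq 3$, and the destabilizing case in Proposition~\ref{s.Bdd.Fano}(iii) requires the more restrictive $\epsilon(Z)=(-K_X)\cdot Z=n+1$, forcing $X\cong\mathbb{P}^n$ by Proposition~\ref{PF2}. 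Either way, the exceptional case in Proposition~\ref{prop:3} cannot occur under our hypotheses, so $(X,-K_X)$ is slope stable with respect to $Z$.

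The argument is essentially a bookkeeping exercise on top of Proposition~\ref{prop:3} and the Kobayashi--Ochiai theorem, so there is no serious obstacle; the only point requiring care is the dichotomy between "genus $\geq 1$'' and "rational'', which is already packaged in Corollary~\ref{cor:high genus}, and the observation that the Fano index bounds anticanonical degrees of all curves from below, which is immediate from the definition of the index as the largest integer $r$ with $-K_X=rH$ for some ample (hence, by our convention, integral and therefore positive-degree on curves) line bundle $H$. I would then simply conclude that every smooth curve $Z$ on $X$ fails to destabilize $(X,-K_X)$, which is precisely the assertion.
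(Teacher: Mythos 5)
Your argument is correct, but it takes a different (and in a sense leaner) route than the paper. The paper's proof is a two-line application of Theorem~\ref{theorem:B}: if a smooth curve destabilizes, then either its anticanonical degree is $2$ (trivial normal bundle case) or $X\cong\mathbb{P}^n$ with $Z$ a line, and the index hypotheses kill both alternatives. You instead bypass Theorem~\ref{theorem:B} entirely: Corollary~\ref{cor:high genus} reduces to rational curves, the index bound forces $(-K_X)\cdot Z\geq 3$ for every curve, and then Proposition~\ref{prop:3} applies directly, with the exceptional case $X\cong\mathbb{P}^n$ excluded because its index is $n+1>n$. The trade-off: the paper's route leans on the heavier machinery already established for Theorem~\ref{theorem:B} (the Fano-bundle classification of possible normal bundles and Proposition~\ref{P1}, which are needed precisely to control curves of anticanonical degree $1$ and $2$), whereas your route shows that under the index hypothesis those low-degree cases never arise, so only the simpler chain Lemma~\ref{BSCF}--Proposition~\ref{PF2}--Proposition~\ref{prop:3} is needed. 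A small bonus of your argument is that it does not use the hypothesis $n\geq 4$ anywhere: it works verbatim for $n\geq 3$, and in particular recovers the index-$3$ threefold (quadric) statement of the subsequent corollary without invoking Theorem~\ref{theorem:C}.
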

\begin{proof}
Because of the Fano index, the Fano manifold $X$ cannot be
$\mathbb{P}^n$. If a smooth curve $C$ destabilizes $(X, -K_X)$,
then $-K_X\cdot C=2$ or $1$ by Theorem~\ref{theorem:B}. However,
$X$ cannot have such a curve because of its Fano index.
\end{proof}

\begin{corollary}
Let $X$ be a Fano manifold of dimension $3$.
\begin{enumerate}
\item If the Fano index of $X$ is $2$, then the polarized
    manifold $(X, -K_X)$ is slope semistable with respect to
    any smooth curve.
\item If the Fano index of $X$ is $3$, i.e., the manifold $X$
    is a quadric hypersurface in $\mathbb{P}^4$, then the
    polarized manifold $(X, -K_X)$ is slope stable with
    respect to any smooth curve.
\end{enumerate}
\end{corollary}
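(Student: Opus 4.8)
The plan is to deduce both statements from Theorem~\ref{theorem:C} by exploiting the numerical restriction that the Fano index imposes on anticanonical degrees of curves. Write $-K_X = rH$ with $r$ the Fano index and $H$ an ample divisor on $X$; then for every irreducible curve $C\subset X$ one has $-K_X\cdot C = r\,(H\cdot C)\in r\mathbb{Z}_{>0}$, and for a smooth rational curve $Z$ this reads $\deg N_{Z/X} = -K_X\cdot Z - 2$. I also note at the outset that, by Theorem~\ref{theorem:C}~(1), a smooth curve with respect to which $(X,-K_X)$ is not slope stable is necessarily rational (equivalently, Corollary~\ref{cor:high genus} disposes of the non-rational case), so throughout it suffices to treat smooth rational curves $Z$.

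For part (1) I take $r=2$, so $-K_X\cdot C$ is even for every curve $C$. By part (2) of Theorem~\ref{theorem:C}, the only smooth curves with respect to which $(X,-K_X)$ can fail to be slope semistable are rational curves $Z$ with $N_{Z/X}\cong\mathcal{O}_{\mathbb{P}^1}\oplus\mathcal{O}_{\mathbb{P}^1}(-1)$; such a $Z$ has $-K_X\cdot Z = \deg N_{Z/X}+2 = 1$, which is odd, contradicting $r=2$. Hence $(X,-K_X)$ is slope semistable with respect to every smooth curve. (One can also argue without invoking Theorem~\ref{theorem:C}~(2): by Corollary~\ref{cor:normal bundle} and Proposition~\ref{prop:3} a destabilizing rational curve would have normal bundle of even degree that is moreover $\le 0$, hence the trivial bundle, and then Proposition~\ref{s.Bdd.Fano}~(i) applies.)

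For part (2) I take $r=3$; by Kobayashi-Ochiai (\cite{KoOch73}) this forces $X$ to be the smooth quadric threefold $Q\subset\mathbb{P}^4$, so now $-K_X\cdot C\in 3\mathbb{Z}_{>0}$, whence $-K_X\cdot C\ge 3$ for every curve $C$. Suppose $(X,-K_X)$ were not slope stable with respect to a smooth curve $Z$. By part (1) of Theorem~\ref{theorem:C}, $Z$ would be either (a) a rational curve with trivial normal bundle, whence $-K_X\cdot Z = 2$; or (b) a rational curve with $N_{Z/X}\cong\mathcal{O}_{\mathbb{P}^1}\oplus\mathcal{O}_{\mathbb{P}^1}(-1)$, whence $-K_X\cdot Z = 1$; or (c) the case $X\cong\mathbb{P}^3$ with $Z$ a line. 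Options (a) and (b) contradict $-K_X\cdot Z\ge 3$, and (c) is impossible because $Q\not\cong\mathbb{P}^3$ (equivalently, $\mathbb{P}^3$ has Fano index $4$). Hence $(X,-K_X)$ is slope stable with respect to every smooth curve.

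I do not expect any serious obstacle: the argument is simply that the divisibility $-K_X\cdot C\in r\mathbb{Z}$ forced by the Fano index is incompatible with the small anticanonical degrees ($1$ and $2$) occurring in the short list of destabilizing curves in Theorem~\ref{theorem:C}, while the remaining exceptional case $X\cong\mathbb{P}^n$ (with $Z$ a line) has Fano index $n+1$, hence neither $2$ nor $3$. The only points needing a moment's care are the reduction to rational curves and keeping straight which normal bundle goes with which value of $-K_X\cdot Z$.
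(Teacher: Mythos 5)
Your argument is correct and is essentially the paper's own proof: the paper simply invokes Theorem~\ref{theorem:C} together with the Fano-index divisibility argument of the preceding corollary, which is exactly the reasoning you spell out (the destabilizing curves allowed by Theorem~\ref{theorem:C} have anticanonical degree $1$ or $2$, incompatible with index $2$ or $3$, and the quadric is not $\mathbb{P}^3$). Nothing further is needed.
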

\begin{proof}
Theorem~\ref{theorem:B} and the same argument as the previous
corollary give us an immediate proof.
\end{proof}

{\em Acknowledgements}.\label{ackref} Yongnam Lee would like to
thank Noboru Nakayama and Yuji Odaka for valuable comments at an
informal seminar talk in RIMS. The paper was worked out during his
visit to KIAS and RIMS in 2010 as an affiliate member and as a
visiting professor. He appreciates KIAS and RIMS for hospitality and
financial support. Jihun Park would like to thank Yuji Sano for
helpful conversations at Ise-Shima and Nagoya in Japan and at Pohang
in Korea where the ideas on Fano bundles occurred to him. The
authors would like to thank a referee for valuable comments to
modify the original version.



\begin{thebibliography}{9}
\bibitem{CT08} { X. X. Chen  \and G. Tian},  `Geometry of Kahler metrics and foliations by holomorphic
discs', {\em Publ. Math. Inst. Hautes Etudes Sci.} No. 107 (2008),
1-107.
\bibitem{CMS} { K. Cho, Y. Miyaoka \and N.I.
Shepherd-Barron}, `Characterizations of projective space and
applications to complex symplectic manifolds', Higher dimensional
birational geometry (Kyoto, 1997), 1--88, {\em Adv. Stud. Pure
Math.}, 35, {\em Math. Soc. Japan, Tokyo}, 2002.
\bibitem{D05} { S. K. Donaldson}, `Lower bounds on the Calabi functional', {\em J.
Differential Geom}. 70 (2005), no. 3, 453-472.
\bibitem{F98} { W. Fulton},  Intersection theory (Second
edition),
 {\em Ergebnisse der Mathematik und ihrer Grenzgebiete. 3. Folge.  2. Springer-Verlag, Berlin,} 1998.
\bibitem{H03} { J. -M. Hwang}, `On the degrees of Fano four-folds of Picard
number 1', {\em J. Reine Angew. Math.} 556 (2003), 225--235.
\bibitem{SH5} { V. A. Iskovskikh \and Yu. G. Prokhorov}, Algebraic Geometry V (Fano
Varieties), {\em Springer-Verlag, Berlin-Heidelberg,} 1999.
 \bibitem{KoOch73} { S. Kobayashi \and T. Ochiai}, `Characterizations of complex projective spaces and
 hyperquadrics', {\em
 J. Math. Kyoto Univ.} 13 (1973), 31--47.
\bibitem{K95} { J. Koll\'ar}, `Rational curves on Algebraic
Varieties', {\em  Springer Verlag, Egrebnisse der Math.} vol.32,
1995.
\bibitem{KMM} { J. Koll\'ar, Y. Miyaoka \and S. Mori},  Rational connectedness and boundedness of Fano
manifolds, {\em J. Diff. Geom.} 36 (1992), 765--779.
\bibitem{L04} { R. Lazarsfeld},  `Positivity in algebraic geometry. I. Classical
setting: line bundles and linear series',  {\em Ergebnisse der
Mathematik und ihrer Grenzgebiete. 3. Folge.  48. Springer-Verlag,
Berlin, } 2004.
\bibitem{PR09} { D. Panov \and J. Ross}, `Slope stability and exceptional divisors of high
genus', {\em Math. Ann.} 343 (2009), 79--101.
\bibitem{PT} { S. Paul \and G. Tian},  `Analysis of geometric
stability',  {\em Int. Math. Res. Not.} 2004, no. 48, 2555--2591
\bibitem{R03} { J. Ross}, `Instability of polarised algebraic
varieties',  {\em PhD thesis, Imperial College,} 2003.
\bibitem{R06} { J. Ross}, `Unstable products of smooth curves', {\em Invent. Math.} 165 (2006), 153--162.
\bibitem{RT06} { J. Ross \and R. Thomas},  `An obstruction to the existence of constant scalar curvature Kahler
metrics', {\em J. Diff. Geom.} 72 (2006), 429--466.
\bibitem{RT07} { J. Ross \and R. Thomas}, `A study of the Hilbert-Mumford criterion
for the stability of projective varieties', {\em J. Alg. Geom.} 16
(2007), 201--255.
\bibitem{SzuWi90} { M. Szurek \and J. Wi\'sniewski}, `Fano bundles over $\mathbb P^3$ and
$Q_3$',
 {\em Pacific J. Math.} 141 (1990), 197--208.
\end{thebibliography}
\end{document}